\documentclass{amsart}

\usepackage{xcolor}
\usepackage{fancyhdr}
\usepackage{amssymb,amsfonts,amsmath,amsthm,tikz,tikz-cd,graphicx,url,mathtools}
\usepackage[all,arc]{xy}
\usepackage{enumerate}
\usepackage{mathrsfs}
\usepackage[colorlinks=true]{hyperref}
\usepackage{mathtools}
\usetikzlibrary{decorations.pathmorphing,snakes}
\usetikzlibrary{knots}

\usepackage[alphabetic]{amsrefs}
\usepackage{comment}

\usepackage[T1]{fontenc}
\usepackage[sc,osf]{mathpazo}

\newtheorem{thm}{Theorem}[section]
\newtheorem{cor}[thm]{Corollary}
\newtheorem{prop}[thm]{Proposition}
\newtheorem{lem}[thm]{Lemma}
\newtheorem{conj}[thm]{Conjecture}

\theoremstyle{definition}
\newtheorem{defn}[thm]{Definition}

\theoremstyle{remark}
\newtheorem{rem}[thm]{Remark}

\newcommand{\Z}{\mathbb{Z}}
\newcommand{\RP}{\mathbb{RP}}
\newcommand{\ZZ}{\mathbb{Z}/2}

\newcommand{\RR}{\mathbb{R}}
\newcommand{\CC}{\mathbb{C}}

\newcommand{\cab}{\mathcal{B}}

\newcommand{\car}{\mathcal{R}}
\newcommand{\cau}{\mathcal{U}}

\newcommand{\nm}[1]{\left\|#1\right\|}

\DeclareMathOperator{\dimm}{dim}

\DeclareMathOperator{\rankk}{rank}

\DeclareMathOperator{\idd}{id}

\DeclareMathOperator{\SO}{SO}
\DeclareMathOperator{\SU}{SU}

\DeclareMathOperator{\KHI}{KHI}
\DeclareMathOperator{\Kh}{Kh}
\DeclareMathOperator{\Is}{I^{\sharp}}
\DeclareMathOperator{\In}{I^{\natural}}

\newcommand{\Khr}{\widetilde{\Kh}}

\usepackage{hyperref}

\title{Instantons and Khovanov homology in $\RP^3$}

\author{Hongjian Yang}
\email{yhj@stanford.edu}
\address{Department of Mathematics, Stanford University, Stanford, CA 94305}

\begin{document}

\begin{abstract}

We study the instanton Floer homology for links in $\RP^3$ and prove that the second page of Kronheimer--Mrowka's spectral sequence is isomorphic to the Khovanov homology of the mirror link. As an application, we prove that Khovanov homology detects the unknot and the standard $\RP^1$ in $\RP^3$. 

\end{abstract}

\maketitle

\section{Introduction}

One strategy to understand Khovanov homology \cite{Khovanov1999ACO} is to investigate its detection properties, which have been extensively studied for knots and links in $S^3$ \cite{kronheimer2011khovanov,baldwin2019khovanov,baldwin2022khovanov,baldwin2021khovanov,xie2019classification,martin2022khovanov,lipshitz2022khovanov,baldwin2021khovanov2}. Parallelly, it remains wildly open to generalize Khovanov homology to knots and links in other $3$-manifolds. This has been done for thickened surfaces \cite{asaeda2004categorification}, $S^1\times S^2$ \cite{rozansky2010categorification}, and the connected sum $\#^r(S^1\times S^2)$ \cite{willis2021khovanov}. Detection properties for these generalizations, especially for the case of thickened surfaces, have been explored in \cite{xie2021instantons,xie2019instanton,xie2020instantons,li2023instanton}. More recently, there has been growing interest in studying Khovanov homology for links in $\RP^3$ \cites{Gabrovek2013THECO,chen2021khovanov,manolescu2023rasmussen,chen2023bar}, probably one of the simplest closed $3$-manifolds aside from $S^3$. This paper is aimed at studying detection properties of Khovanov homology in $\RP^3$. 

Similarly to the original version, Khovanov homology for links in $\RP^3$ assigns a bigraded abelian group $\Kh(K)$ for each link $K\subset \RP^3$. The idea of construction is to view $\RP^3\backslash\text{pt}$ as a twisted $I$-bundle over $\RP^2$, and then project $K$ to $\RP^2$ to build a Khovanov-type chain complex from the cube of resolutions of the link diagram. There is also a reduced version $\Khr(K)$ for link $K$ with a marked point.
  
  There are two homology classes for links in $\RP^3$. Following \cite{manolescu2023rasmussen}, we say a link $L$ has \textit{class $0$} if it is null-homologous, and it has \textit{class $1$} if its first homology class $[L]\in H_1(\RP^3)=\ZZ$ is nontrivial. A link is said to be \textit{local} if it is contained in a $3$-ball in $\RP^3$. There are two knots deserve the name of unknot: the (local) unknot $U_1\subset B^3\subset \RP^3$ in class $0$, and the \textit{projective unknot} $U'$ in class $1$. The latter is defined as the standard embedded $\RP^1$ in $\RP^3$. Our main result is the following.

\begin{thm}\label{main theorem}
	Let $K$ be a knot in $\RP^3$. The reduced Khovanov homology $\Khr(K;\ZZ)$ has dimension $1$ if and only if $K$ is the unknot (when $K$ is of class $0$), or the projective unknot (when $K$ is of class $1$).
\end{thm}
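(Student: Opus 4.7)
The plan is to adapt Kronheimer--Mrowka's approach to unknot detection in $S^3$ \cite{kronheimer2011khovanov}. I would use the spectral sequence constructed in the earlier part of this paper, whose $E_2$ page is identified with $\Khr(K;\ZZ)$ and which converges to an instanton Floer invariant $\Is(\RP^3, K)$ of the pair. This spectral sequence yields the rank inequality $\dim_{\ZZ} \Is(\RP^3, K) \leq \dim_{\ZZ} \Khr(K;\ZZ)$.

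First I would compute $\Is(\RP^3, U_1)$ and $\Is(\RP^3, U')$ directly and verify both have rank $1$. For the local unknot $U_1$ this should reduce to known calculations of framed instanton invariants for $\RP^3$ and for the unknot in $S^3$, via a connected-sum formula. For the projective unknot $U'$, the complement is a solid torus, so sutured decomposition with a meridional disk should collapse the problem to a well-understood sutured instanton computation. Combined with the spectral-sequence inequality and the automatic lower bound $\dim_{\ZZ} \Khr \geq \dim_{\ZZ} \Is$, this gives the ``if'' direction of the theorem.

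For the converse, suppose $\dim_{\ZZ} \Khr(K;\ZZ) = 1$, hence $\dim_{\ZZ} \Is(\RP^3, K) = 1$. I would pass to the sutured instanton Floer homology $\SHI$ of the knot exterior via a closure/doubling procedure, then decompose $\SHI$ by an Alexander-type grading coming from a taut properly embedded spanning surface. Kronheimer--Mrowka-style genus detection for $\SHI$ would then force the minimal spanning surface to be a disk (when $K$ has class $0$) or a $\RP^2$-with-hole (when $K$ has class $1$); standard three-manifold topology in $\RP^3$ then pins down $K$ as $U_1$ or $U'$ respectively.

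The hard part will be the class $1$ instanton detection. In class $0$ the argument parallels $S^3$ closely, using an orientable Seifert surface and a $\Z$-graded Alexander decomposition. A class $1$ knot in $\RP^3$ has no Seifert surface, so one must set up the Alexander-type grading from a non-orientable spanning surface and prove the corresponding genus detection in that setting. Arranging the signs and gradings so that rank $1$ really does force the non-orientable spanning surface to have the smallest possible genus is the delicate point that the paper's instanton machinery needs to resolve.
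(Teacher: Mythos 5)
Your high-level strategy — establish a rank inequality from the spectral sequence, then feed the resulting bound on the instanton invariant into a sutured-instanton genus/fiberedness detection argument — is the right shape, and it is the route the paper follows. But there are two concrete gaps.

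First, the rank arithmetic is off. The $E_2$ page of the spectral sequence is \emph{two} copies of $\Khr(m(K);\ZZ)$ (because $\Is(\RP^3)$ contributes a rank-two factor $V_0$), so the inequality is
\[
2\dim\Khr(K;\ZZ)\ \ge\ \dim\In(\RP^3,K),
\]
not $\dim\Khr \ge \dim\In$. Correspondingly, $\In(\RP^3,U_1)$ and $\In(\RP^3,U')$ each have rank $2$, not rank $1$. From $\dim\Khr(K;\ZZ)=1$ you only get $\dim\In(\RP^3,K)\le 2$; you then need a matching lower bound. That lower bound is a structural one: after identifying $\In(\RP^3,K;\CC)\cong\KHI(\RP^3,K;\CC)$ via \cite[Prop.~1.4]{kronheimer2011khovanov}, one has $\dim_{\CC}\KHI(\RP^3,K)\ge\dim_{\CC}\Is(\RP^3)=2$ by \cite[Prop.~1.5]{li2022instanton}. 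Only with both bounds does one land on $\dim\KHI=2$, which is the hypothesis needed for the detection theorems. Your outline treats ``rank $1$'' as the target throughout, which would not match any known detection statement.

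Second, for the converse you sketch re-deriving genus detection from scratch, but the more important issue you do not flag is the one that actually needs care. In class $0$, once $\dim\KHI(\RP^3,K)=2$, the conclusion follows directly from \cite[Thm.~1.5]{li2023enhanced}, using that $\RP^3$ is an instanton L-space; there is nothing new to prove. In class $1$, the knot is primitive and the relevant input is \cite[Thm.~5.1]{baldwin2021small}, which builds an Alexander grading on $\KHI$ from a rational Seifert surface satisfying an admissibility condition in the sense of \cite{ghosh2023decomposing}. Your ``non-orientable spanning surface'' framing is not how the grading is set up; the rational Seifert surface is used, and the subtle point is that it need not be admissible as given. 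The paper fixes this by applying a positive stabilization (\cite[Def.~2.24, Lem.~2.25]{li2022instanton}) and shifting the induced grading by $-1/2$, checking that this preserves tautness of the decomposed sutured manifolds so that genus and fiberedness detection still hold. That is the ``delicate point'' you correctly sense is lurking, but you have located it in the wrong place and have not proposed a mechanism to resolve it.
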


Our proof of Theorem \ref{main theorem} follows Kronheimer and Mrowka's approach \cite{kronheimer2011khovanov}. The first step is to establish a spectral sequence relating Khovanov homology and singular instanton Floer homology for links in $\RP^3$. The spectral sequence behaves slightly different for two classes of knots, and we state the results in two sequent propositions.

\begin{thm}\label{ss class 0}
	Let $K$ be a class $0$ link in $\RP^3$. There is a spectral sequence with the $E_2$ page isomorphic to two copies of $\Kh(m(K);\ZZ)$, the Khovanov homology of the mirror of $K$, and it collapses to the singular instanton Floer homology $\Is(\RP^3,K;\ZZ)$. Similarly, for the reduced version, there is a spectral sequence with the $E_2$ page isomorphic to two copies of $\Khr(m(K);\ZZ)$ and abutting to the (reduced) singular instanton Floer homology $\In(\RP^3,K;\ZZ)$.
\end{thm}

For class $1$ links, the $E_2$ page is a variant of the original Khovanov homology.

\begin{thm}\label{ss class 1}
	Let $K$ be a class $1$ link in $\RP^3$. There is a spectral sequence with the $E_2$ page isomorphic to two copies of $\Kh_1(m(K);\ZZ)$, and it collapses to the singular instanton Floer homology $\Is(\RP^3,K;\ZZ)$. A similar result holds for the reduced version. Here, $\Kh_1$ is a deformed version of Khovanov homology, defined in Definition \ref{def Kh1}.
\end{thm}

\begin{rem}
	Previously, Chen \cite{chen2021khovanov} constructed a spectral sequence relating another variant of Khovanov homology and the Heegaard Floer homology of a branched double cover for null-homologous links in $\RP^3$, as an analogue of \cite{ozsvath2005heegaard}. It is interesting to ask whether Chen's variant also detects the unknot.
\end{rem}

\begin{cor}\label{cor rank ineq}
	Let $K$ be a link in $\RP^3$. Then we have
	\begin{equation*}
		2\dim\Kh(K;\ZZ)\ge\dim\Is(\RP^3,K;\CC).
	\end{equation*}
	Similar inequality holds for the reduced version: \begin{equation}\label{rank ineq}
		2\dim\Khr(K;\ZZ)\ge\dim\In(\RP^3,K;\CC).
	\end{equation}
\end{cor}

Now assume $K$ is a knot. Given the rank inequality (\ref{rank ineq}) and the isomorphism\[ \In(\RP^3,K;\CC)\cong \KHI(\RP^3,K;\CC)\]from \cite[Proposition 1.4]{kronheimer2011khovanov}, we deduce that \begin{equation}\label{eqn rank ineq KHI}
	2\dim\Khr(K;\ZZ)\ge\dim\KHI(\RP^3,K;\CC).
\end{equation} 


The second step is to classify all knots $K$ in $\RP^3$ satisfying \[\dim\KHI(\RP^3,K;\CC)=2.\] In general, it is related to a Floer-theoretic program to approach the Berge Conjecture \cite{hedden2011floer}. For our case, it essentially follows from \cite{li2023enhanced} and \cite{baldwin2021small}, using structural results in (sutured) instanton knot homology.

\subsection*{Organization of the paper}In Section \ref{sec bg}, we provide some background on Khovanov homology in $\RP^3$ and instanton Floer homology. We then study the instanton Floer homology of crossingless links in $\RP^3$ in Section \ref{sec Is unlink}. In Section \ref{sec E_1 differential}, we compute the differentials on the $E_1$ page of Kronheimer--Mrowka's spectral sequence. We then prove Theorems \ref{ss class 0} and \ref{ss class 1}, and Corollary \ref{cor rank ineq}. Finally, we prove Theorem \ref{main theorem} and include some further discussion in Section \ref{sec detection}.

\subsection*{Acknowledgments}The author would like to thank Ciprian Manolescu, Zhenkun Li, Yi Xie, and Fan Ye for helpful discussions. The author is also grateful to the anonymous referee for their valuable comments that helped clarify a mathematical issue and improve the writing quality of the manuscript. 

\section{Background}\label{sec bg}

\subsection{Khovanov homology for links in $\RP^3$}

Khovanov homology assigns a bigraded abelian group $\Kh(K)$ for link $K$ in $\RP^3$. It is originally defined in \cite{asaeda2004categorification} in $\ZZ$ coefficients, and is lifted to $\Z$ coefficients in \cite{Gabrovek2013THECO}. In \cite{manolescu2023rasmussen}, Manolescu and Willis defined a deformed Khovanov complex over the ring $\Z[s,t]$ with two variables and use it to define a Rasmussen invariant. We briefly review their construction in this section.

Let $K$ be a link in $\RP^3$. By identifying $\RP^3\backslash\text{pt}$ with the twisted $I$-bundle over $\RP^2$, we can project $K$ onto this $\RP^2$ to obtain a link diagram $D$ for $K$ with $N$ crossings. In practice, we often represent this by a diagram in a disk and keep in mind that antipodal points on the boundary are identified. For a crossing in $D$, we have two ways to resolve it: the $0$-smoothing and the $1$-smoothing, as showed in Figure \ref{01smoothing}. 

\begin{figure}[hbtp]
	\centering
	\begin{tikzpicture}
		
		\begin{knot}[ignore endpoint intersections=false,clip width=5,clip radius=5pt,looseness=1.2]
			
			\strand[thick] (-0.5,0.5)--(0.5,-0.5);
			\strand[thick] (0.5,0.5)--(-0.5,-0.5);
			
		\end{knot}
		\node[below] at (0,-0.7) {a crossing};
		\draw[->,snake=snake] (-0.8,0)--(-1.6,0);
		\draw[->,snake=snake] (0.8,0)--(1.6,0);
		
		\draw[thick] (-3,0.5)  to [out=315,in=180]  (-2.5,0.2) to [out=0,in=225]   (-2,0.5);
		\draw[thick] (-3,-0.5)  to [out=45,in=180]  (-2.5,-0.2) to [out=0,in=135]   (-2,-0.5);  \node[below] at (-2.5,-0.7) {$0$-smoothing};
		
		\draw[thick] (2,0.5)  to [out=315,in=90]  (2.2,0) to [out=270,in=45]    (2,-0.5);  \node[below] at (2.5,-0.7) {$1$-smoothing};
		\draw[thick] (3,0.5)  to [out=225,in=90]  (2.8,0) to [out=270,in=135]   (3,-0.5);
	\end{tikzpicture}
	\caption{Two types of smoothings.}\label{01smoothing}
\end{figure}

Fix an order of crossings, and the $2^N$ possible smoothings of $D$ can be indexed by $\{0,1\}^N$. For $u=(u_1,u_2,\dots,u_N),\,v=(v_1,v_2,\dots,v_N)$, we say $v\ge u$ if $v_i\ge u_i$ for all $i$. This gives a partial order on $\{0,1\}^N$. Let $D_v$ be the link diagram for the resolution $v\in\{0,1\}^N$. Two types of circles in $\RP^2$ may appear in $D_v$: the trivial one and the homological essential one, corresponding to the unknot and the projective unknot respectively.

\begin{lem}[{\cite[Lemma 2.2]{manolescu2023rasmussen}}]\label{lem reso circles}
	A homological essential circle appears in a resolution $D_v$ if and only if $K$ has class $1$. In this case, there is exactly one homological essential circle.
\end{lem}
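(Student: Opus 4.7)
The plan is to relate the homology class of the resolved diagram to the homology class of $K$, and then to use a purely topological fact about simple closed curves in $\RP^2$ to conclude that at most one essential circle can appear.

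First I would set up the homology bookkeeping. Identify $\RP^3 \setminus \{\text{pt}\}$ with the twisted $I$-bundle over $\RP^2$; projection to the base gives an isomorphism $H_1(\RP^3;\ZZ) \xrightarrow{\cong} H_1(\RP^2;\ZZ) = \ZZ$, and under this isomorphism the class $[K] \in H_1(\RP^3;\ZZ)$ is sent to the class of the projected (immersed) diagram $D$. I would then verify that resolving a crossing does not change the mod-$2$ homology class of $D$: locally, replacing the two transverse arcs of a crossing by either the $0$-smoothing or the $1$-smoothing yields a $1$-chain with the same boundary as the original, so in $H_1(\RP^2;\ZZ)$ the resolved diagram $D_v$ still represents $[K]$.

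Second, I would express $[D_v]$ in terms of the circles of $D_v$. Since $D_v$ is a disjoint union of embedded simple closed curves in $\RP^2$, and a trivial circle bounds a disk (hence is null-homologous) while a homologically essential circle represents the generator of $H_1(\RP^2;\ZZ)$, the class $[D_v] \in \ZZ$ equals the number of essential circles modulo $2$. Combined with the previous step, this shows that $D_v$ contains an odd number of essential circles if $[K] = 1$ and an even number if $[K] = 0$. This already establishes the ``only if'' direction of the first claim.

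The last ingredient, which is also the main (but mild) obstacle, is the uniqueness statement: any collection of pairwise disjoint simple closed curves in $\RP^2$ contains at most one essential curve. The key fact is that an essential simple closed curve $C \subset \RP^2$ has a M\"obius band neighborhood, and its complement $\RP^2 \setminus C$ is an open disk; any other simple closed curve disjoint from $C$ must therefore lie in a disk and thus be null-homotopic. Combining this with the parity constraint above forces the number of essential circles in $D_v$ to be $0$ when $[K] = 0$ and exactly $1$ when $[K] = 1$, which completes the proof.
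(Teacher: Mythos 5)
Your argument is correct and complete; note that the paper does not prove this lemma at all but simply quotes it from \cite[Lemma 2.2]{manolescu2023rasmussen}, and your proof---invariance of the mod-$2$ class of the projected diagram under smoothing a crossing, plus the fact that two disjoint embedded circles in $\RP^2$ cannot both be essential (the complement of an essential circle is an open disk, or equivalently the generator of $H_1(\RP^2;\ZZ)$ has odd self-intersection)---is essentially the standard argument given there. One small mislabel: the parity step by itself establishes the ``if'' direction (class $1$ forces an odd, hence nonzero, number of essential circles), not the ``only if'' direction, but this is harmless because your final combination of parity with the at-most-one fact yields both directions and the uniqueness claim.
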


The set $\{0,1\}^N$ can be equipped with the $\ell^1$ norm:\[\nm{v}_1=\sum_{i=1}^N|v_i|.\]For $u,v\in\{0,1\}^N$ such that $v\ge u$ and $\nm{v-u}_1=1$ (in this case, we say there is an \textit{edge} $e\colon u\to v$), there is an edge cobordism \[D_u\to D_v\]given by passing a saddle. It may merge two circles into one, or split one circle into two, or turn a circle into another circle. The first two cases have already appeared in the definition of (annular) Khovanov homology, and the last case, a $1$-to-$1$ bifurcation, does not appear for links in $S^3$ or thickened annulus. It can happen only when the link has class $0$. See Figure \ref{fig 1-1bifur} for clarification.

\begin{figure}[hbtp]
	\centering
	\begin{tikzpicture}
		\draw[->,snake=snake] (-0.5,0)--(0.5,0);
		
		\draw[thick] (-2,0.5)  to [out=315,in=180]  (-1.5,0.2) to [out=0,in=225]   (-1,0.5);
		\draw[thick] (-2,-0.5)  to [out=45,in=180]  (-1.5,-0.2) to [out=0,in=135]   (-1,-0.5);  
		
		\draw[thick] (-1.5,0) circle [radius=sqrt(2)/2];
		
		\draw[thick] (1,0.5)  to [out=315,in=90]  (1.2,0) to [out=270,in=45]    (1,-0.5);  
		\draw[thick] (2,0.5)  to [out=225,in=90]  (1.8,0) to [out=270,in=135]   (2,-0.5);
		
		\draw[thick] (1.5,0) circle [radius=sqrt(2)/2];
	\end{tikzpicture}
	\caption{A $1$-to-$1$ bifurcation. Notice that there is only one component on both sides as we are working on $\RP^2$, which is presented as a disk with antipodal points on the boundary attached. This can only happen when the link is in class $0$. }\label{fig 1-1bifur}
\end{figure}

For each $v\in\{0,1\}^N$, we fix an ordering for the circles in $D_v$ and an orientation for each circle in $D_v$. The collection of this data is called a set of \textit{auxiliary diagram choices} for $D$ in \cite{manolescu2023rasmussen}. Let $R$ be the ring $\Z[s,t]$. Let $V=\langle 1,X\rangle$ and $\overline{V}=\langle \overline{1},\overline{X}\rangle$ be two free rank $2$ graded $R$-modules. Roughly speaking, the \textit{deformed Khovanov complex} $KC_d^*(D)$ is defined as follows. First, to each oriented, ordered resolution $D_v$, we assign a triply graded chain group $C(D_v)$ as the tensor product of factors of $V$ for each trivial circle, and $\overline{V}$ for the homological essential circle (if present). Second, we define a differential $\partial^e$ for each edge $e\colon u\to v$ (see \cite[Table 6]{manolescu2023rasmussen}). It is identically zero for $1$-to-$1$ bifurcations and decomposes into homogeneous parts for merging and splitting maps:\[\partial^e=\partial^e_0+s\partial^e_-+st\Phi^e_0+t\Phi^e_+.\]These are enough to define the Khovanov chain complex in $\ZZ$ coefficients by taking the total complex as an iterated mapping cone. To lift it to $\Z$ coefficients, we adopt certain rules (see \cite[Section 2]{manolescu2023rasmussen}) to decide the sign for each summand of $\partial^e$, using the resolution choices for $D$. We omit the detail here as we are not going to use it in the paper. Then $KC_d^*(D)$ is defined as the chain complex \[\left(C(D),\partial\right)\coloneqq\left(\bigoplus_{v\in\{0,1\}^N}C(D_v),\sum_{e\colon u\to v}\partial^e\right).\]\begin{thm}[{\cite[Theorem 2.15]{manolescu2023rasmussen}}]
	The chain homotopy type of $KC_d^*(D)$ is a link invariant for $K$. 
\end{thm}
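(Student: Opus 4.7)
The standard strategy is to show that $KC_d^*(D)$ is invariant, up to chain homotopy over $R=\Z[s,t]$, under each of the Reidemeister moves for link diagrams in $\RP^2$, and that it is independent of the auxiliary diagram choices. Since a link diagram in $\RP^2$ can be represented inside a disk with antipodal boundary points identified, the relevant set of moves consists of the three classical moves R1, R2, R3 performed in the interior of the disk together with additional moves in which a strand is pushed through the boundary (see \cite{Gabrovek2013THECO} for the complete list). For each move, I would exhibit an explicit chain-homotopy equivalence between the deformed complexes of the two diagrams by iterated Gauss elimination: when a component of the total differential $\partial$ restricts to an isomorphism between two direct summands of the cube, those summands can be deleted and the remaining differentials adjusted without changing the chain homotopy type.

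Concretely, for R1 the cube splits according to the resolution of the local crossing; the Frobenius multiplication/comultiplication structure on $V$ and $\overline V$ produces an isomorphism summand in one half of the cube, and Gauss elimination cancels it, leaving a shifted copy of $KC_d^*(D')$ for the simplified diagram. The R2 case is analogous, with four subcomplexes reducing to one after cancellation. R3 follows either by a direct identification of the two cubes or by the classical trick of reducing to two applications of R2. For the moves specific to diagrams in $\RP^2$, the same cancellation framework applies, but one must track the behavior of the homologically essential circle (which, by Lemma \ref{lem reso circles}, is either always present or always absent throughout the cube) and, for class 0 diagrams, the new $1$-to-$1$ bifurcations of Figure \ref{fig 1-1bifur}. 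Since the differential $\partial^e$ on such an edge is identically zero, invariance at these edges reduces to matching the tensor factors $V$ and $\overline V$ and the grading shifts on the two sides of the move. Independence of the ordering of crossings, of the ordering of circles in each $D_v$, and of the chosen orientations on those circles, is then a sign-tracking argument: each of these changes induces an obvious chain isomorphism, provided the signs of $\partial^e_0,\partial^e_-,\Phi^e_0,\Phi^e_+$ prescribed in Table 6 of \cite{manolescu2023rasmussen} transform compatibly.

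The main obstacle is the bookkeeping of signs under the $\Z$-coefficient lift. In $\ZZ$ coefficients the proof is essentially that of \cite{asaeda2004categorification} and is a combinatorial check on configurations of circles; passing to $\Z$ forces a coherent sign on each edge of each cube, and one must verify that the chain homotopies produced by Gauss elimination are $\Z$-linear and compatible with the edge signs at every intermediate stage. In practice I would first fix a chain-homotopy model for each Reidemeister move over $\ZZ$, then lift piece by piece to the deformed complex over $\Z[s,t]$, and finally verify that the local sign rules are preserved under the move. The most delicate case is expected to be a move carried out near the non-trivial loop in $\RP^2$, where pushing a strand across the boundary can alter the chosen orientation of a resolution circle and hence the sign of the associated edge map; compatibility here is precisely what the sign conventions of \cite{manolescu2023rasmussen} are engineered to ensure, and the real work of the proof lies in this verification.
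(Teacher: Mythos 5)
This theorem is stated in the paper only as a citation to Manolescu--Willis \cite{manolescu2023rasmussen}*{Theorem 2.15}; the present paper does not give its own proof, so there is no in-paper argument to compare against. Your outline does track the structure of the argument in the cited reference: invariance is checked move by move (the interior moves R1--R3 together with the additional Drobotukhina-type moves where a strand is pushed across the boundary of the disk), using delooping/Gauss elimination on acyclic subcomplexes of the cube, followed by a verification that changing the auxiliary diagram choices (crossing order, circle order, circle orientations) produces an isomorphic complex once the edge-sign rules are transported correctly.

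Two cautions. What you have written is a plan rather than a proof: every Reidemeister move requires an explicit homotopy equivalence over $R=\Z[s,t]$ and an explicit check that the local sign rules of Table~6 of \cite{manolescu2023rasmussen} are preserved, and you defer all of these verifications, including the genuinely new ones at the boundary moves, which is precisely where all the $\RP^2$-specific content (essential circles, $1$-to-$1$ bifurcations, the lack of a checkerboard coloring) lives. Also, the remark that R3 ``reduces to two applications of R2'' is imprecise: the standard Bar-Natan argument delooops both sides of the R3 cube to a common smaller complex using R2-type cancellations as a tool, but it is not a formal consequence of R2-invariance, and with the deformed differential $\partial^e_0+s\partial^e_- + st\Phi^e_0+t\Phi^e_+$ one must verify that the extra terms are carried along correctly through the cancellations. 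These are gaps in execution rather than errors in approach.
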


When $K$ is a class $1$ link, we can simplify the signs by using a preferred set of resolution choices as follows \cite[Theorem 2.14]{manolescu2023rasmussen}. Each resolution $D_v$ contains exactly one essential circle $C_v$ by Lemma \ref{lem reso circles}. An orientation of $C_v$ induces orientations on other circles in $D_v$ according to its `distance' from $C_v$. Fix an orientation of $K$ and an orientation for the essential circle in the oriented resolution of $D$. These data determine a set of resolution choices by requiring that all saddle cobordisms are orientable. Then \cite[Lemma 2.21]{manolescu2023rasmussen} ensures the that each $\partial^e$ sends generators in $C(D_u)$ to sums of generators in $C(D_v)$ of the same sign. To summarize, the complex $KC_d^*(D)$ is determined in this case by the local rules \cite[Table 6]{manolescu2023rasmussen} and a global sign assignment up to homotopy.

\begin{defn}\label{def Kh1}
	The Khovanov homology $\Kh(K)$ \cite{Gabrovek2013THECO} is defined as the homology of $KC_d(D)$ after specializing to $s=t=0$. The deformed invariant $\Kh_1(K)$ is defined as the homology of $KC_d(D)$ after specializing to $s=t=0$.
\end{defn}
 
 For a link $K$ with a marked point $p$, we can define a reduced version of the Khovanov homology $\Khr(K,p)$ following \cite{khovanov2003patterns}. For each resolution $D_v$, there is a unique circle containing $p$. The reduced Khovanov chain complex is obtained from the original one by replacing the factor $V$ (resp. $\overline{V}$ when the marked circle is essential) corresponding to the marked circle by $V/\langle X\rangle$ (resp. $\overline{V}/\langle \overline{X} \rangle$) at each vertex of the resolution cube.  When $K$ is a knot, the resulting homology group is independent to the position of $p$, and we may write it as $\Khr(K)$ for simplicity. 
 
The ordinary and the deformed versions of Khovanov homology satisfy a rank inequality, as follows.

\begin{prop}\label{prop Kh1 ss}
	For a link $K\subset \RP^3$, we have \[\rankk\Kh(K)\ge\rankk\Kh_1(K).\]Similar inequality holds for the reduced version.
\end{prop}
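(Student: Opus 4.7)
The proposition is essentially a direct consequence of the spectral sequence already set up in the paragraph preceding it, so my plan is simply to formalize that argument and then invoke the standard fact that a spectral sequence cannot increase total rank.

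First I would fix a link diagram $D$ for $K$ together with a set of auxiliary diagram choices, and consider the deformed Khovanov complex $KC_d^*(D)$ specialized at $s=1$, $t=0$. I would define a decreasing filtration $F^p C(D)$ by the third grading, taking $F^p$ to be the span of generators whose third grading is at least $p$. The key algebraic input is the decomposition $\partial=\partial_0^e+\partial_-^e$ at the level of each edge: since $\partial_0^e$ preserves the third grading and $\partial_-^e$ decreases it by $2$, the total differential $\partial$ on $C(D)$ satisfies $\partial(F^p)\subset F^p$, so the filtration is compatible with $\partial$. Because $C(D)$ is finitely generated and the third grading takes only finitely many values on $C(D)$, this filtration is bounded.

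Next I would identify the pages of the associated spectral sequence. The $E_0$ page is the associated graded of $(C(D),\partial)$, on which the induced differential is precisely $\partial_0=\sum_e \partial_0^e$ (the $\partial_-^e$ contributions strictly drop filtration degree and so vanish on the associated graded). Therefore $E_1 \cong H^*(C(D),\partial_0) = \Kh(K)$, by definition of the Khovanov homology as the $s=t=0$ specialization. Since the filtration is bounded and $\partial$ is the total differential of the $s=1,t=0$ specialization, the spectral sequence converges to $H^*(C(D),\partial)=\Kh_1(K)$.

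Finally I would extract the rank inequality. Since each page $E_r$ is obtained from $E_{r-1}$ by taking homology with respect to $d_r$, one has $\operatorname{rank} E_{r} \le \operatorname{rank} E_{r-1}$, and the $E_\infty$ page is the associated graded of a bounded filtration on $\Kh_1(K)$, whose total rank equals $\operatorname{rank}\Kh_1(K)$. Chaining these inequalities gives $\operatorname{rank}\Kh(K)=\operatorname{rank} E_1 \ge \operatorname{rank} E_\infty = \operatorname{rank}\Kh_1(K)$, which is the claim. I do not anticipate a genuine obstacle here; the only point requiring care is confirming that the edge differentials defined in \cite[Table 6]{manolescu2023rasmussen} really decompose into the stated homogeneous parts with respect to the third grading, which is immediate from the definitions once the specialization $s=1$, $t=0$ is imposed.
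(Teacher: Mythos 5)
Your argument is exactly the one the paper uses: filter by the third grading, note that the $s=1,t=0$ differential decomposes as $\partial_0+\partial_-$ with $\partial_0$ grading-preserving and $\partial_-$ dropping the grading by $2$, and read off the rank inequality from the resulting bounded spectral sequence from $\Kh(K)$ to $\Kh_1(K)$. This matches the paragraph immediately preceding Proposition \ref{prop Kh1 ss}, so no further comparison is needed.
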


\begin{proof}
For the variant $\Kh_1(K)$, the differential decomposes as \[\partial^e=\partial_0^e+\partial_-^e.\]There is a third grading on $KC_d(D)$, denoted by $k$ in \cite{manolescu2023rasmussen}. The differential $\partial_0^e$ preserves the $k$ grading while $\partial_-^e$ decreases it by $2$. Hence, the filtration induced by the $k$ grading gives a spectral sequence from \[\Kh(K)=H^*(C(D),\partial_0)\] to $H^*(C(D),\partial)=\Kh_1(K)$, and the inequality follows from it.
\end{proof}

\subsection{Instanton Floer homology}

Kronheimer and Mrowka \cite{kronheimer2011khovanov} defined the \textit{singular instanton Floer homology} $\Is(Y,K)$ for a pair $(Y,K)$, where $Y$ is a smooth, closed, oriented $3$-manifold, and $K$ is an unoriented link in $Y$. Roughly speaking, it is constructed as a Morse homology of the (perturbed) Chern--Simons functional over a space of orbifold $\SO(3)$-connections on an associated orbifold bundle constructed from $(Y,K)$. Critical points of the unperturbed Chern--Simons functional (after quotienting by the gauge group) correspond to $\SU(2)$-representations of $\pi_1(Y\backslash K)$ that have trace zero around $K$ \textit{without} modulo conjugates. The resulting $\Is(Y,K)$ carries an absolute $\Z/4$ grading \cite[Proposition 4.4]{kronheimer2011khovanov}. When $K=\emptyset$, the \textit{framed instanton homology} $\Is(Y)\coloneqq\Is(Y,\emptyset)$ is further studied by Scaduto \cite{scaduto2015instantons}. There is also a reduced version $\In(Y,K)$ for link $K$ with a marked point. The unreduced version $\Is(Y,K)$ can be recovered as $\In(Y,K\sqcup U)$, where $U$ is an unknot near infinity with a marked point.

A cobordism \[(W,S)\colon (Y_1,K_1)\to (Y_0,K_0)\]induces a map \[\Is(W,S)\colon \Is(Y_1,K_1)\to \Is(Y_0,K_0)\]between $\Z/4$-graded abelian groups. By \cite[Proposition 4.4]{kronheimer2011khovanov}, the grading is shifted by \begin{equation}\label{eqn grading shift}
	-\chi(S)+b_0(\partial^+S)-b_0(\partial^-S)-\frac{3}{2}(\chi(W)+\sigma(W))+\frac{1}{2}(b_1(Y_0)-b_1(Y_1))
\end{equation} under $\Is(W,S)$. Similar result holds for the reduced version. In particular, when the knots are not present, the grading shift on framed instanton homology is given by\begin{equation}\label{eqn grading shift framed}
-\frac{3}{2}(\chi(W)+\sigma(W))+\frac{1}{2}(b_1(Y_0)-b_1(Y_1))
\end{equation}(cf. \cite[Proposition 7.1]{scaduto2015instantons}).

The map is only well-defined up to an overall sign. To resolve the sign ambiguity, we need to choose an \textit{$\In$-orientation} for $(W,S)$ in the sense of \cite[Definition 4.3]{kronheimer2011khovanov}. There is a canonical choice of $\In$-orientation for $(W,S)$ when $W$ is a product cobordism $[0,1]\times Y$ and $S$ is oriented.

As other Floer-theoretic invariants, singular instanton Floer homology satisfies several exact triangles. The first one we record here is a surgery exact triangle, which is essentially Floer's original exact triangle (cf. \cite{braam1995floer,scaduto2015instantons}) with an additional knot involved. The calculation in \cite[Section 5]{scaduto2015instantons} is local, and we can directly apply the argument there and replace the original instanton homology by singular instanton homology. A version for (sutured) instanton knot homology \cite{kronheimer2010knots} can be found in \cite[Theorem 2.19]{li2022instanton}.

\begin{thm}\label{thm surgery tri}
	Let $Y$ be a closed oriented $3$-manifold, $K\subset Y$ be a link, and $J\subset Y$ be a framed knot disjoint from $K$. Let $Y_r$ be the result of $r$-surgery along $J$, and $K_r\subset Y_r$ be the link corresponding to $K$. Then there is an exact triangle \[ \xymatrix@C=10pt@R=10pt{
		\Is(Y_0,K_0;\mu) \ar[rr]^{i} & & \Is(Y_1,K_1) \ar[dl]^{j} \\
		& \Is(Y,K)\ar[ul]^{k} & \\
	}. \]Here $\mu$ is a meridian of $J$, and $\Is(Y_0,K_0;\mu)$ is a version of singular instanton homology that only depends on $Y_0,\,K_0$, and the homology class of $\mu$ in $H_1(Y_0;\ZZ)$. See \cite[Section 7.5]{scaduto2015instantons} for details. The maps $i,j,k$ are induced by handle attaching cobordisms. A similar result holds for the reduced version.
\end{thm}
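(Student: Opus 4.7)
The plan is to adapt Scaduto's proof of the Floer surgery exact triangle \cite{scaduto2015instantons} to the singular instanton setting, using the hypothesis that the framed knot $J$ is disjoint from the link $K$. Because surgery on $J$ takes place inside a neighborhood $N(J)\subset Y\setminus K$, the link $K$ persists unchanged in $Y$, $Y_0$, and $Y_1$, and the three cobordisms inducing the maps $i$, $j$, $k$ are obtained by attaching a single $2$-handle along appropriately framed curves lying inside $[0,1]\times N(J)$. The singular surface in each of these cobordisms is just the product $[0,1]\times K$, which is disjoint from the $2$-handle core. Functoriality, grading shifts from \eqref{eqn grading shift}, and sign-ambiguity resolution via an $\In$-orientation are then formal consequences of the general cobordism formalism.

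For exactness, the key input is a local calculation: identify the chain-level triangle by analyzing the moduli spaces of anti-self-dual orbifold connections over the relevant $4$-manifolds and their degenerations near the surgery region, producing the requisite null-homotopies of $j\circ i$, $k\circ j$, $i\circ k$ and the exactness at each vertex. Since the orbifold singular data is supported along $[0,1]\times K$, which is at positive distance from $[0,1]\times N(J)$, every calculation in Scaduto's Section~5 occurs in a region where the orbifold bundle is standard (trivial singular locus); the singular connections around $K$ merely contribute extra "passive" factors to the chain complexes and moduli spaces, with all gluing theorems, compactness results, and perturbation schemes carrying over verbatim. The meridian $\mu$ in $\Is(Y_0,K_0,\mu)$ plays its usual role of introducing a $\ZZ$ holonomy that selects the correct component of the configuration space on the $0$-surgery side and rules out reducibles coming from the new $H_1$-class, exactly as in \cite[Theorem 2.19]{li2022instanton}.

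The principal obstacle I expect is not analytic but bookkeeping. One must verify that the $\In$-orientations on the three composite cobordisms are compatible with Scaduto's sign conventions so that the exact triangle holds on the nose rather than up to unit, and that holonomy perturbations can be chosen simultaneously on the three corners of the triangle in a way compatible with the product singular structure $[0,1]\times K$ inside each cobordism. Avoiding reducibles on the singular stratum requires that the pair $(Y,K)$ (and its surgeries) be admissible in the sense of \cite{kronheimer2011khovanov}; when this fails, as on the $0$-surgery side, the meridian $\mu$ restores admissibility, which is why it must appear in the vertex $\Is(Y_0,K_0,\mu)$. The reduced version follows by the same argument applied to $(Y,K\sqcup U)$ with $U$ an unknot near infinity carrying the marked point.
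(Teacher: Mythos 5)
Your proposal follows the same approach the paper takes: adapt Scaduto's local surgery-triangle argument to the singular instanton setting, exploiting the hypothesis that $J$ (and hence the surgery region and all $2$-handle attachments) lies at positive distance from the singular locus $[0,1]\times K$, so that the orbifold data contributes only passively. The paper itself justifies this theorem with only a one-paragraph remark citing \cite[Section 5]{scaduto2015instantons} and \cite[Theorem 2.19]{li2022instanton}, so your write-up is in fact more detailed than the source. One small clarification: in the $\Is$ framework the $\sharp$-construction already eliminates reducibles, so the meridian $\mu$ in $\Is(Y_0,K_0,\mu)$ serves primarily as the bundle datum (a twist of the $\SO(3)$-bundle) required by the surgery triad, as in Scaduto's convention, rather than as a device needed to restore admissibility or rule out reducibles on the $0$-surgery side.
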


The second is an unoriented skein exact triangle \cite[Proposition 6.11]{kronheimer2011khovanov}. Assume that $K_0,K,K_1$ are links in $Y$ that only differ in a $3$-ball by the pattern showed in Figure \ref{01smoothing}. Then there are standard skein cobordisms that induce an exact triangle \[ \xymatrix@C=10pt@R=10pt{
	\In(Y,K_1) \ar[rr] & & \In(Y,K_0) \ar[dl] \\
	& \In(Y,K)\ar[ul] & \\
}{.}\]More generally, let $K$ be a link in a closed $3$-manifold $Y$, and let $B_1,B_2,\dots,B_N$ be $N$ disjoint $3$-balls in $Y$, intersecting $K$ as the middle of Figure \ref{01smoothing}. For each $v\in\{0,1\}^N$, we can form the link $K_v$ by changing $K$ in each $3$-ball locally to its $0$ or $1$ resolution according to $v$. For each pair $v\ge u$, there is a standard skein cobordism $S_{vu}$ from $K_v$ to $K_u$ that induces a map on instanton homology up to sign. Note that the direction here is opposite to the differentials of Khovanov homology.

\begin{prop}[{\cite[Corollary 6.9, 6.10]{kronheimer2011khovanov}}]\label{prop km ss}
 There is a spectral sequence with the $E_1$ term given by \[\bigoplus_{v\in\{0,1\}^N}\Is(Y,K_v)\]and converging to $\Is(Y,K)$. Moreover, if $\In$-orientations have been chosen for all cobordisms $S_{vu}$ so that the compatibility condition of \cite[Lemma 6.1]{kronheimer2011khovanov} holds, then the differential on the $E_1$ page is given by \[d_1=\sum_{i=1}^N\sum_{v-u=e_i}(-1)^{\delta(v,u)}\Is(S_{vu}).\]Here $e_i$ is the standard $i$-th basis vector in $\RR^N$, and $\delta(v,u)=\sum_{j=i}^Nv_j$.
\end{prop}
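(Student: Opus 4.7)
The plan is to obtain the spectral sequence by iterating the unoriented skein exact triangle from \cite[Proposition 6.11]{kronheimer2011khovanov} over all $N$ crossings of the diagram, exactly as in Kronheimer--Mrowka's original construction for links in $S^3$. Concretely, I would build a cube of singular instanton chain complexes indexed by $\{0,1\}^N$: to each $v$ assign a chain complex $C_*(Y,K_v)$ computing $\Is(Y,K_v)$, and to each edge $e\colon u\to v$ with $v-u=e_i$ assign a chain map $f_{vu}\colon C_*(Y,K_v)\to C_*(Y,K_u)$ induced by the standard skein cobordism $S_{vu}$. The first task is to arrange these into a single chain complex whose underlying group is $\bigoplus_v C_*(Y,K_v)$ with differential $D=d+\sum_{v-u=e_i}(-1)^{\delta(v,u)}f_{vu}$, and whose total homology is $\Is(Y,K)$.

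To see why such a total complex exists and computes $\Is(Y,K)$, I would proceed inductively on $N$, collapsing one crossing at a time by forming the mapping cone of $f_{vu}$. The base case $N=1$ is precisely the skein exact triangle, realized at the chain level via the cone on the cobordism map. The inductive step uses the naturality of the cobordism-induced maps and the fact that a square in the cube is filled (up to a canonical chain homotopy) by the composite cobordism $S_{wu}$ corresponding to smoothing two crossings; this is the local model in \cite[Section 6]{kronheimer2011khovanov} and carries over unchanged to $\RP^3$ since the skein cobordisms are supported in a $3$-ball. Filtering the resulting complex by $\|v\|_1$ gives a spectral sequence whose $E_0$ page is $\bigoplus_v C_*(Y,K_v)$ with the internal differential, whose $E_1$ page is $\bigoplus_v \Is(Y,K_v)$, and which converges to $H_*(\text{total})=\Is(Y,K)$.

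The main thing to be careful about is the sign convention so that $D^2=0$. Once $\In$-orientations for every $S_{vu}$ have been chosen satisfying the compatibility condition of \cite[Lemma 6.1]{kronheimer2011khovanov} on every $2$-face of the cube, the composite along any square differs from its transpose by an overall sign, and inserting the Koszul-type factor $(-1)^{\delta(v,u)}=(-1)^{\sum_{j\ge i} v_j}$ for the edge $v-u=e_i$ makes the contributions along each square cancel, so that $D^2=0$ and the claimed $d_1$ formula follows. This is the key point where the statement of the proposition matches the one in \cite[Corollary 6.10]{kronheimer2011khovanov}.

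I do not expect a genuine obstacle beyond ambient-space dependence: the cube construction, the mapping cone argument, and the sign verification are all local in a neighborhood of the crossings, so the arguments in \cite[Section 6]{kronheimer2011khovanov} apply verbatim with $S^3$ replaced by $Y=\RP^3$. The only thing to record explicitly is that the standard skein cobordism $S_{vu}$ and its $\In$-orientation are defined locally inside a ball, hence unaffected by the global topology of $Y$. With this observation, the result reduces to a direct citation of \cite[Corollary 6.9, Corollary 6.10]{kronheimer2011khovanov}.
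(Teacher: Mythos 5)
Your proposal is correct and matches the paper's treatment: the paper offers no independent proof but simply invokes \cite[Corollaries 6.9 and 6.10]{kronheimer2011khovanov}, whose cube construction is already stated for links in an arbitrary closed oriented $3$-manifold $Y$, the skein cobordisms and crossings being local to balls. The only caveat worth recording is that in Kronheimer--Mrowka's actual total complex the differential includes higher maps $f_{vu}$ for $\nm{v-u}_1\ge 2$ (coming from families of metrics) rather than edge maps alone, but since you ultimately defer to their Theorem 6.8 and its corollaries, this does not affect the $E_1$ page or the stated formula for $d_1$.
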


We do not have to worry about the sign ambiguity when using $\ZZ$ coefficients. In this case, the differential on the $E_1$ page is simply given by the sum of the maps $\Is(S_{vu})$.

\section{Instanton homology of crossingless links in $\RP^3$}\label{sec Is unlink}

The goal of this section is to describe the instanton Floer homology of crossingless links in $\RP^3$. Here, a \textit{crossingless link} means a link in $\RP^3$ that admits a diagram in $\RP^2$ without crossings. By Lemma \ref{lem reso circles}, every resolution of a class $0$ knot is a local unlink, while every resolution of a class $1$ knot  is a disjoint union of a local unlink and a projective unknot. These give all possible crossingless links. 

Let $U_n$ be a \textit{fixed} local unlink with $n$ components, and $U_n\coprod U$ be a \textit{fixed} crossingless link of class $1$ with $n+1$ components. We first establish isomorphisms between the instanton Floer homology for $U_n$ and $U_n\coprod U$ with certain graded abelian groups, and then investigate in what sense these isomorphisms are canonical under isotopies.

\subsection{The class $0$ unknot}

We begin with $\Is(\RP^3,U_0)=\Is(\RP^3)$. By \cite[Section 7.5]{scaduto2015instantons}, there is an exact triangle \[ \xymatrix@C=10pt@R=10pt{
	\Is(S^3) \ar[rr]^{i} & & \Is(\RP^3) \ar[dl]^{j} \\
	& \Is(S^3)\ar[ul]^{k} & \\
} \]associated to the $(1,2,\infty)$ surgery triad on the unknot in $S^3$. Recall that $\Is(S^3)$ is a free abelian group of rank $1$, supported in degree $0$. The map $k$ is induced by the cobordism $\mathbb{CP}^2\backslash(\operatorname{(int)}(D^4\sqcup -D^4))$ and has degree $1$ by the grading formula (\ref{eqn grading shift framed}). Therefore, $k$ is the zero map. Maps $i$ and $j$ are induced by the cobordism $DT^*S^2\backslash\operatorname{int}(D^4)$ and $-DTS^2\backslash\operatorname{int}(D^4)$ respectively, and they have degree $0$. Let $V_0$ be a rank $2$ free $\Z/4$-graded abelian group, supported in degree $0$. From the discussion above, we have \[\Is(\RP^3)\cong V_0\] as $\Z/4$-graded abelian groups. In particular, recall that a rational homology sphere is an \textit{instanton L-space} if \[\dim_\CC\Is(Y)=|H_1(Y;\Z)|.\] It follows that $\RP^3$ is an instanton L-space.

Let $U_1$ be the local unknot in $\RP^3$. By \cite[Corollary 5.9]{kronheimer2011khovanov}, there is an isomorphism\[ \Is(\RP^3,U_1)\cong \Is(S^3,U_1)\otimes\Is(\RP^3)=\Is(S^3,U_1)\otimes V_0,\]induced by an excision cobordism. Recall from \cite[Lemma 8.3]{kronheimer2011khovanov} that $\Is(S^3,U_1)$ can be canonically identified with a rank $2$ free $\Z/4$-graded abelian group $V$ with two distinguished generators $v_+$ and $v_-$ in degree $0$ and $-2$ respectively, characterized by a cap (resp. cup) cobordism $D_+$ (resp. $D_-$) from (resp. to) $(S^3,U_0)$.

\subsection{The class $1$ unknot}

To understand the instanton homology of the projective unknot $U'$, we exploit another surgery exact triangle. Let $J$ be the $1$-framed unknot in $S^3$, and $K$ be an unknot such that $J\cup K$ forms a Hopf link. Theorem \ref{thm surgery tri} gives an exact triangle \[ \xymatrix@C=10pt@R=10pt{
	\In(S^3,U_1) \ar[rr]^{i} & & \In(\RP^3,U') \ar[dl]^{j} \\
	& \In(S^3,U_1)\ar[ul]^{k} & \\
}. \]The map $k$ has degree $-2$ while $i,j$ have degree $0$ by the grading formula (\ref{eqn grading shift}). Recall that $\In(S^3,U_1)$ is a rank $1$ free abelian group supported in degree $0$. Therefore, $k$ is the zero map, and $\In(\RP^3,U')$ is a rank $2$ free abelian group supported in degree $0$. 

There is an unoriented skein exact triangle (cf. \cite[Lemma 8.3]{kronheimer2011khovanov}) relating the reduced and unreduced version of instanton homology groups:\begin{equation}\label{eqn skein U'}
\xymatrix@C=10pt@R=10pt{
		\In(\RP^3,U') \ar[rr]^{a} & & \In(\RP^3,U') \ar[dl]^{b} \\
		& \Is(\RP^3,U')\ar[ul]^{c} & \\
	}.
\end{equation}The map $a$ has degree $1$ by (\ref{eqn grading shift}) and hence is the zero map. Therefore, $\Is(\RP^3,U')$ is a rank $4$ free abelian group with two rank $2$ summands on grading $0$ and $-2$, respectively. This in turn implies the map $k$ is zero in the exact triangle  \begin{equation}\label{eqn Is class 1} \xymatrix@C=10pt@R10pt{
\Is(S^3,U_1) \ar[rr]^{i} & & \Is(\RP^3,U') \ar[dl]^{j} \\
& \Is(S^3,U_1)\ar[ul]^{k} & \\
}.\end{equation}Define $W=\Is(\RP^3,U')$. Let $w_\pm$ be the image of $v_\pm\in \Is(S^3,U_1)$ under $i$. The discussion above implies there is a (non-canonical) isomorphism \begin{equation}\label{eqn split U'}W\cong \Z\langle w_+,w_-\rangle\oplus V.
\end{equation} Denote the corresponding generators of the second summand by $w_\pm'$.

For further purpose, we interpret $\Is(\RP^3,U')$ in another way. Critical points of the unperturbed Chern--Simons functional for $(\RP^3,U')$ correspond to representations $\pi_1(\RP^3\backslash U')\to \SU(2)$ with $\operatorname{tr}a^2=0$, where $a$ is a generator of the fundamental group of $\RP^3\backslash U'\cong S^1\times D^2$. The representation variety contains two copies of $S^2$. The unperturbed Chern--Simons functional is Morse--Bott. After perturbation, the Floer chain complex is generated by four critical points of grading $0,0,-2,-2$ respectively, and there are no Floer differentials.

\subsection{Crossingless links and their isotopies}

For the fixed crossingless links $U_n$ and $U_n\coprod U$, we can compute the instanton homology by results from previous two subsections and a K\"unneth formula \cite[Corollary 5.9]{kronheimer2011khovanov}. We summarize the discussion in the following proposition.

\begin{prop}\label{prop E1 objects}
	We have isomorphisms of $\Z/4$-graded abelian groups\begin{align*}
		\Phi_n\colon V^{\otimes n}\otimes V_0&\to \Is(\RP^3,U_n),\\
		\Phi'_n\colon V^{\otimes n}\otimes W&\to\Is(\RP^3,U_n\sqcup U'),
	\end{align*}well-defined up to signs, and induced by excision cobordisms and  natural for split cobordisms from $U_n$ (resp. $U_n\coprod U'$) to itself (cf. \cite[Corollary 8.5]{kronheimer2011khovanov}). Here a \textnormal{split cobordism} is a cobordism of $U_n$ (resp. $U_n\coprod U'$) to itself in $\RP^3\times I$ which is a disjoint union of $n$ cobordisms of $U_1$ to $U_1$ (each contained in a standard ball $B^3\times I$) and a cobordism from $\RP^3$ (resp. $(\RP^3,U')$) to itself.
\end{prop}

To proceed further, we need to introduce the notion of singular instanton Floer homology with local coefficients \cite[Section 3.9]{kronheimer2011knot} (see also \cite[Section 3.1]{kronheimer2013gauge}). Roughly speaking, given a knot $K$ in a closed oriented $3$-manifold $Y$, we can define a local system $\Gamma$ on the configuration space $\cab(Y,K^\sharp)$ that encodes the holonomy along $K$ for orbifold connections on $Y$. The germs are free rank $1$ modules over $\car=\Z[t,t^{-1}]\subset \Z[\RR]$. Denote the corresponding homology group with local coefficients by $\Is(Y,K;\Gamma)$. Most of the argument in Section \ref{sec bg} remains true for Floer homology with local coefficients after mild modifications.

The advantage of using local coefficients here is that we can compare homotopic (not necessarily isotopic) cobordisms, which is the first step to ensure the identification in Proposition \ref{prop E1 objects} is canonical in the sense of \cite[Proposition 8.10]{kronheimer2011khovanov}. The following proposition is an analog of \cite[Proposition 5.15]{xie2021instantons}, which deals with the case of links in a thickened annulus.

\begin{prop}\label{prop hmtp inv}
	Let $K_0$ and $K_1$ be two crossingless links in $\RP^3$ in the same class, and let $S$ and $S'$ be two cobordisms in $\RP^3\times I$ from $K_0$ to $K_1$. If $S$ and $S'$ are homotopic relative to the boundary, then they induce the same map from $\Is(\RP^3,K_0)$ to $\Is(\RP^3,K_1)$.
\end{prop}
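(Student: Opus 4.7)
The plan is to adapt the argument of Xie in \cite[Proposition 4.9]{xie2020instantons} to the $\RP^3$ setting, using the local coefficient framework recalled just above. The key structural input is that for a crossingless $K$ in class $0$ or $1$, the Floer chain complex of $(\RP^3,K)$ with local coefficients $\Gamma$ has no differentials, so $\Is(\RP^3,K;\Gamma)$ is a free $\mathcal{R}$-module of small known rank. This rigidity means that cobordism maps with local coefficients are tightly constrained by their matrix entries and by the grading-shift formula \eqref{eqn grading shift}, and they specialize faithfully to ordinary cobordism maps upon setting $t = 1$.

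First I would reduce to a local move analysis. A generic homotopy from $S$ to $S'$ rel boundary can be presented as a finite sequence of ambient isotopies rel boundary, interspersed with finger and Whitney moves that create or cancel pairs of oppositely signed double points of an immersed generic interpolation between $S$ and $S'$. Ambient isotopies rel boundary induce equal cobordism maps via functoriality together with the canonical $\In$-orientation on product cylinders, so the problem reduces to showing that a canceling pair of double points introduced in a small $4$-ball disjoint from the boundary does not affect the induced map on $\Is(\RP^3,K;\Gamma)$. A standard neck-stretching argument then isolates the contribution of the finger/Whitney move to a fixed local model in $B^4$.

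The main obstacle will be the computation in this local model, showing that the self-endomorphism of the relevant tensor factor of $V$, $V_0$, or $W$ produced by the canceling pair of double points is the identity. The grading-shift formula \eqref{eqn grading shift} forces this self-map to be grading-preserving (the local cobordism piece has $\chi = \sigma = 0$), and the freeness together with the small rank of the target leaves only a one-parameter family of $\mathcal{R}$-linear, grading-preserving candidates; the explicit count of singular instantons in the standard $B^4$ model, carried out as in Xie, pins this down to the identity. Once the local-coefficient equality is established, specializing $t = 1$ in $\Gamma$ recovers the ordinary equality $\Is(S) = \Is(S')$, completing the proof. Class-$0$ versus class-$1$ enters only in selecting which tensor factor of $V$, $V_0$, or $W$ is being modified, via Proposition \ref{prop E1 objects}, and is otherwise transparent to the argument.
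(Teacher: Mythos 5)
Your overall framework---pass to local coefficients, use that $\Is(\RP^3,K;\Gamma)$ is a free $\car$-module for crossingless $K$, reduce a rel-boundary homotopy to isotopies plus finger/Whitney moves, and recover the $\Z$-coefficient statement at the end via $\Z=\car/(t-1)$---is the same as the paper's. But the central step of your argument is not how the proof goes, and as written it has a genuine gap. Neither Xie's \cite[Proposition 4.9]{xie2020instantons} nor this paper carries out an ``explicit count of singular instantons in a standard $B^4$ model,'' and neither shows that a cancelling pair of double points produces an identity self-map of a tensor factor. The actual input is Kronheimer--Mrowka's relation for immersed cobordisms with local coefficients \cite[Proposition 5.2]{kronheimer2011knot}: introducing double points changes the induced map by factors of $(t^{-1}-t)$, so for homotopic embedded cobordisms $S,S'$ what one obtains is only $(t^{-1}-t)^m\Is(\RP^3\times I,S;\Gamma)=(t^{-1}-t)^m\Is(\RP^3\times I,S';\Gamma)$. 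The whole point of the freeness of $\Is(\RP^3,K_i;\Gamma)$ is torsion-freeness: $(t^{-1}-t)^m$ is a non-zero-divisor in $\car$, so it can be cancelled to give $\Is(S;\Gamma)=\Is(S';\Gamma)$ \emph{before} specializing, and this order matters because $(t^{-1}-t)$ vanishes at $t=1$, so no conclusion could be drawn by setting $t=1$ first. Your proposal never confronts these $(t^{-1}-t)$ factors at all, and it uses freeness for a different (and insufficient) purpose.

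Two subsidiary assertions are also unsupported. The ``standard neck-stretching argument'' isolating the finger/Whitney move inside a $B^4$ amounts to an excision/splitting property along a $3$-sphere meeting the immersed surface in four points; nothing of the sort is available off the shelf in this theory, and the local move does not naturally present either map as (original map)$\circ$(endomorphism of a tensor factor). And the claim that grading-preservation plus freeness and small rank leave ``only a one-parameter family'' of candidate $\car$-linear maps is false: a graded endomorphism of a free module of rank $4$ (let alone $2^n\cdot 4$) has many more parameters, and no such abstract rigidity pins down the identity---the gauge-theoretic content really is the double-point relation of \cite{kronheimer2011knot}. If you replace your local-model analysis by that citation and insert the cancellation-then-specialize step above, your argument becomes the paper's proof.
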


\begin{proof}[Proof]
	
	The proof is a formal adaption of  \cite[Proposition 5.15]{xie2021instantons}. The key observation is that the perturbed Chern--Simons functional for $(\RP^3,U')$ has four critical points, pairwise differing by an even degree, and hence there are no Floer differentials. Therefore,  $\Is(\RP^3,U';\Gamma)$ is a free $\car$-module of rank $4$. Similarly, $\Is(\RP^3,U_1;\Gamma)$ is also a free $\car$-module of rank $4$. Further, $\Is(\RP^3,K;\Gamma)$ is free for $K=U_n$ or $K=U_n\sqcup U'$ as \cite[Corollary 5.9]{kronheimer2011khovanov} still holds for Floer homology with local coefficients. 
	
	View $\Z$ as a $\car$-module $\car/(t-1)$. By the universal coefficient theorem, it suffices to show that $S$ and $S'$ induce the same map on Floer homology groups with local coefficients. Since $S$ and $S'$ are homotopic, they are related by a sequence of certain moves that introduce double points (see the proof of {\cite[Proposition 4.9]{xie2020instantons}}). By \cite[Proposition 5.2]{kronheimer2011knot}, we have \[(t^{-1}-t)^m\Is(\RP^3\times I,S;\Gamma)=(t^{-1}-t)^m\Is(\RP^3\times I,S';\Gamma),\]which completes the proof since  $\Is(\RP^3,K_0;\Gamma)$ and $\Is(\RP^3,K_1;\Gamma)$ are free $\car$-modules. 
\end{proof}

Given an isotopy $f_t\colon K\to \RP^3$, we can consider the trace cobordism \[S_f=\bigcup_{t\in I}\left(f_t(K)\times\{t\}\right)\subset \RP^3\times I.\] Proposition \ref{prop hmtp inv} implies that two isotopies from a crossingless link $K$ to itself induce the same map on instanton homology if they are homotopic relative to the boundary. The next proposition asserts that the induced maps are the same up to sign even if two isotopies are not homotopic.

\begin{prop}\label{prop diff hmtpy}
	Let $f$, $f'$ be two isotopies from a crossingless link $K$ to itself. Then the induced maps $\Is(S_f)$ and $\Is(S_{f'})$ are the same up to a sign.
\end{prop}

\begin{proof}
	By Proposition \ref{prop hmtp inv}, we can assume that the cobordisms $S_f$ and $S_f'$ are split, and it suffices to prove the assertion for the cases $K=U_1$ or $K=U'$. 
	
	The relative homotopy classes from $S^1\times I$ to $\RP^3\times I$ arising as traces of isotopies of $K$ are classified by $\pi_1(\RP^3)\cong \ZZ$. To see this, consider an isotopy $f_t\colon S^1\to\RP^3$. We can assume that there is a point $p$ such that $f_0(p)=f_1(p)$. The trace of $p$, given by $\cup(f_t(p)\times \{t\})$, is classified by $\pi_1(\RP^3)\cong\ZZ$ up to homotopy. Notice that $\pi_2(\RP^3\times I)$ is trivial, so there is a unique way to fill in the interior of the square $({S^1\backslash\{p\}})\times I$. Hence, the homotopy class of the trace is uniquely determined by the trace of $p$.
	
	When $K=U_1$, they induce the same map by the naturality under split cobordisms of the first isomorphism given in Proposition \ref{prop E1 objects}.
	
	Now let $K=U'$, and $S_1$ be the non-trivial cobordism from $U'$ to itself. Recall from Equation (\ref{eqn Is class 1}) that there is a short exact sequence \[0\to \Is(S^3,U_1)\xrightarrow{i}\Is(\RP^3,U')\xrightarrow{j}\Is(S^3,U_1)\to 0,\]where $i$ and $j$ are induced by handle attaching cobordisms. We have \[\Is(S_1)(w_+)=(\Is(S_1)\circ i)(v_+)=i(v_+)=w_+,\]and \[j(\Is(S_1)(w_+'))=\Is(S_1)(j(w_+'))=v_+.\]Moreover, $\Is(S_1)$ has order $2$ since $S_1\circ S_1$ is isotopic to the trivial cobordism. Therefore, $\Is(S_1)(w_+')=w_+'$. Similar argument applies for $w_-$ and $w_-'$. Hence, $\Is(S_1)$ is the identity map on $W$.
\end{proof}

The following proposition summarizes the discussion in Propositions \ref{prop hmtp inv} and \ref{prop diff hmtpy} (cf. \cite[Proposition 8.10]{kronheimer2011khovanov}).

\begin{prop}\label{prop E1 objs canonical}
	Let $\cau_n$ be a local unlink with $n$ components, and $\cau_N\coprod\cau'$ be a class $1$ crossingless link. Then there are canonical isomorphisms defined up to signs\begin{align*}
		\Psi_n\colon V^{\otimes n}\otimes V_0&\to \Is(\RP^3,\cau_n)\\
		\Psi_n'\colon V^{\otimes n}\otimes W&\to\Is(\RP^3,\cau_n\coprod \cau')
	\end{align*}given by composing the isomorphism from Proposition \ref{prop E1 objects} with $\Is(S)$. Here, $S$ is the trace of any isotopy from the standard $U_n$ (resp. $U_n\coprod U'$) to $\cau_n$ (resp. $\cau_n\coprod \cau'$).
\end{prop}

\section{Differentials on the $E_1$ page
}\label{sec E_1 differential}

In this section, we compute the differentials on the $E_1$ page of Kronheimer--Mrowka's spectral sequence, Proposition \ref{prop km ss}. We prove Theorems \ref{ss class 0} and \ref{ss class 1} in two subsections.

\subsection{The class $0$ case}

In the class $0$ case, the merging and splitting maps are similar to the $S^3$ case as no essential circle appears, and they have been calculated in \cite[Lemma 8.7]{kronheimer2011khovanov}. It remains to calculate the map induced by the $1$-to-$1$ bifurcation as showed in Figure \ref{fig 1-1bifur}. Write this cobordism as \[(\RP^3\times I,S)\colon (\RP^3,U_1)\to (\RP^3,U_1).\]If we omit the embedding, the surface $S$ is simply an $\RP^2$ with two disk removed. By the grading formula (\ref{eqn grading shift}), the induced map \[\Is(\RP^3\times I,S) \colon \Is(\RP^3,U_1)\to \Is(\RP^3,U_1)\]has degree $1$. Hence, it is the zero map since $\Is(\RP^3,U_1)$ is supported in even degrees. Combining with \cite[Lemma 8.7]{kronheimer2011khovanov} and the K\"unneth formula for split links \cite[Corollary 5.9]{kronheimer2011khovanov}, we obtain the following.

\begin{prop}\label{prop case 0 local cobordism}
	The merge map induced by a pair-of-pants cobordism from $U_1$ to $U_2$ is given by \[\Delta\otimes\idd_{V_0}\colon V\otimes V_0\to V\otimes V\otimes V_0,\]and the split map is given by \[\nabla\otimes\idd_{V_0}\colon V\otimes V\otimes V_0\to V\otimes V_0.\]Here $\Delta$ and $\nabla$ are defined as in \cite[Lemma 8.7]{kronheimer2011khovanov}. The 1-to-1 bifurcation induces the zero map on $V\otimes V_0$.
\end{prop}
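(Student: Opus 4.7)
My plan is to handle the three claims separately. For the merge and split maps, I reduce to Kronheimer--Mrowka's $S^3$ computation by exploiting locality, while for the $1$-to-$1$ bifurcation I rely on a grading argument.

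I would begin with the $1$-to-$1$ bifurcation, which the paragraph preceding the proposition largely completes. With $S\cong\RP^2\setminus(\mathrm{int}\,D^2\sqcup\mathrm{int}\,D^2)$, $\chi(S)=-1$, $\chi(W)=\sigma(W)=0$, and $b_1(Y_0)=b_1(Y_1)$, the grading formula \eqref{eqn grading shift} yields a shift of $1\pmod 4$. Since $V\otimes V_0\cong\Is(\RP^3,U_1)$ is concentrated in even gradings by Proposition \ref{prop E1 objects}, the induced map is forced to vanish.

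For the merge and split maps, the key observation is that $U_1$, $U_2$, and the pair-of-pants cobordism between them are all \emph{local}: they fit inside $B^3\times I\subset \RP^3\times I$. Using Proposition \ref{prop hmtp inv}, I would homotope $S$ rel boundary into this standard local position. The ambient pair $(\RP^3\times I,S)$ then exhibits the cobordism as a boundary connect sum of the standard $(S^3\times I,\text{pair-of-pants})$ with the trivial $(\RP^3\times I,\emptyset)$; that is, a split cobordism between the connect-sum pairs $(S^3\#\RP^3,U_i)=(\RP^3,U_i)$. The K\"unneth formula \cite[Corollary 5.9]{kronheimer2011khovanov}, combined with the naturality clause of Proposition \ref{prop E1 objects}, identifies the induced map with the tensor product of the $S^3$ pair-of-pants map and $\idd_{V_0}$. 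The first factor is precisely the map $\Delta$ (for a cobordism $U_1\to U_2$) or $\nabla$ (for $U_2\to U_1$) computed in \cite[Lemma 8.7]{kronheimer2011khovanov}, yielding the stated formulas.

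The main obstacle I anticipate is ensuring the identification in Proposition \ref{prop E1 objects} is genuinely canonical---without this, the K\"unneth formula would only pin down the map up to a nontrivial automorphism of $V\otimes V_0$. This canonicity is precisely what is delivered by the local-coefficient argument reviewed after Proposition \ref{prop E1 objects}, following \cite{xie2020instantons}. Sign ambiguity is a secondary concern, either absorbed into the sign conventions of Proposition \ref{prop km ss} or eliminated entirely by passing to $\ZZ$ coefficients.
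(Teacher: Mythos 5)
Your proposal is correct and follows essentially the same route as the paper. The paper handles the merge and split maps in one sentence by appealing to \cite[Lemma 8.7]{kronheimer2011khovanov} together with the K\"unneth formula \cite[Corollary 5.9]{kronheimer2011khovanov}, exactly the ingredients you identify; your more explicit account---homotoping the cobordism into a split position via Proposition \ref{prop hmtp inv} and viewing it as a boundary connect sum of the $S^3$ pair-of-pants with the trivial $(\RP^3\times I,\emptyset)$---is a correct expansion of what the paper leaves implicit. For the $1$-to-$1$ bifurcation you reproduce the paper's grading argument precisely: $S\cong\RP^2$ minus two disks gives $\chi(S)=-1$, the ambient cobordism contributes nothing, formula \eqref{eqn grading shift} gives a degree-$1$ shift, and $\Is(\RP^3,U_1)\cong V\otimes V_0$ is concentrated in even degrees, forcing the map to vanish. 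You also correctly flag the only genuine subtlety---that the isomorphism of Proposition \ref{prop E1 objects} needs to be canonical for the naturality to pin down the maps---and correctly locate its resolution in the local-coefficient argument following \cite{xie2020instantons}.
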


\begin{proof}[Proof of Theorem \ref{ss class 0}]
	Fix a diagram for $K$ and consider a collection of crossingless links $K_v\,(v\in\{0,1\}^N)$ as above. By Proposition \ref{prop km ss}, there is a spectral sequence with the $E_1$ page given by \[\bigoplus_{v\in\{0,1\}^N}\Is(\RP^3,K_v)\]and converging to $\Is(\RP^3,K)$. It suffices to identify the $E_1$ page with the Khovanov chain complex. By Proposition \ref{prop E1 objs canonical}, we have a canonical identification induced by a cobordism map\[\Is(\RP^3,K_v)\cong V^{\otimes n_v}\otimes V_0\cong \operatorname{CKh}_{\bar{v}}(K)\otimes V_0=\operatorname{CKh}_{v}(m(K))\otimes V_0.\]Here $\bar{v}=(1,1,\dots,1)-v$, $n_v$ is the number of connected components in $K_v$, and $\operatorname{CKh}_{v}(m(K))$ is the chain group corresponding to the vertex $v$ in the complex that computes the Khovanov homology of $m(K)$. 
	
	We want to compare the differentials on the $E_1$ page and the differentials for $\operatorname{CKh}(m(K);\ZZ)$. When using $\ZZ$ coefficients, the former differential is given by sum of the maps induced by skein cobordisms without any sign ambiguity by Proposition \ref{prop km ss}. The skein cobordism $S_{vu}$ is a union of a product cobordism and either a pair-of-pants cobordism or a $1$-to-$1$ bifurcation. By Propositions \ref{prop hmtp inv} and \ref{prop diff hmtpy}, we can assume that $S_{vu}$ is split when computing its induced map on $\Is$. Therefore, by the naturality of the K\"unneth formula, $\Is(S_{vu})$ is a tensor product of the map in Proposition \ref{prop case 0 local cobordism} and the identity map on remaining components, which coincides with the differential on Khovanov chain complex of $m(K)$ using the same diagram. By the universal coefficient theorem, this implies the $E_2$ page of the spectral sequence is isomorphic to \[\Kh(m(K);\ZZ)\otimes_{\ZZ} (V_0\otimes_\Z \ZZ)=\Kh(m(K);\ZZ)\oplus\Kh(m(K);\ZZ).\]
	
	Now let $K$ be a link with one marked point $p$, and assume that $p$ is not a crossing in the diagram. The assertion about the reduced version follows from the same argument as \cite[Section 8.7]{kronheimer2011khovanov}. The $E_1$ page is obtained from the unreduced cube by replacing one $V$ ( corresponding to the circle containing $p$) by a factor $V/\langle v_-\rangle$, and the differentials are given by the quotient of corresponding maps in Proposition \ref{prop case 0 local cobordism}.
\end{proof}

\begin{rem}
	While the map induced by a cobordism between instanton Floer homology groups always has a global sign determined by the choice of the $\In$-orientation, the differentials in the Khovanov chain complex may have complicated signs depending on the set of auxiliary diagram choices. So it is not expected to obtain a spectral sequence over $\Z$ in the class $0$ case. Another evidence is that there is no checkerboard coloring (which is used in \cite[Section 8.1]{kronheimer2011khovanov} to find coherent orientations) for non-local diagrams in $\RP^2$.
\end{rem}

\subsection{The class $1$ case}

Now assume that $K\subset \RP^3$ is a link of class $1$. Four types of bifurcations may appear in the resolution cube of $K$: two of them involve merging and splitting for null-homologous circles, and their induced map are the same as in the case of $S^3$; the other two bifurcations involve merging (resp. splitting) from (resp. to) an essential circle and a null-homologous circle to (resp. from) an essential circle. See Figure \ref{fig essen bifur}.

\begin{figure}[hbtp]
	\centering
	\begin{tikzpicture}
		\draw[<->] (-0.55,0)--(0.55,0);
		
		
		\draw[thick] (-2,0.5) circle [radius=0.2];
		
		\draw[thick,green] (-2,0.3) to (-2,0);
				
		\draw[thick] (-3,0) to (-1,0);
		
		\draw[thick] (1,0) to (1.8,0) to [out=0,in=-90] (1.75,0.2) to[out=90,in=180] (2,0.45) to[out=0,in=90] (2.25,0.2) to[out=-90,in=180] (2.2,0) to (3,0);
		
		\draw[thick] (-2,0) circle [radius=1];

		\draw[thick] (2,0) circle [radius=1];
	\end{tikzpicture}
	\caption{A bifurcation on $\RP^2$ with an essential circle. Notice that the diameter in the middle is a closed curve as we are woring on $RP^2$, and it represents the projective unknot $U'$.}\label{fig essen bifur}
\end{figure}

Denote the pair-of-pants cobordism from $U_1\sqcup U'$ to $U'$ (resp. from $U'$ to $U_1\sqcup U'$) by $S_+$ (resp. $S_-$). The merging and splitting maps are given by \[\Is(S_+)\colon W\otimes V\to W\]and \[\Is(S_-)\colon W\to W\otimes V\]respectively.

\begin{prop}\label{prop case 1 local}
	There is a choice of splitting in (\ref{eqn split U'}) such that under the basis $w_+,w_-,w_+',w_-'$ of $W$, $\Is(S^+)$ and $\Is(S_-)$ are given by \begin{align*}
		w_\pm\otimes v_+\mapsto w_\pm&\quad w_\pm'\otimes v_+\mapsto w_\pm'\\
		w_+\otimes v_-\mapsto w_-&\quad w_-\otimes v_-\mapsto 0\\
		w_+'\otimes v_-\mapsto w_-'&\quad w_-'\otimes v_-\mapsto 0,
	\end{align*}and
\begin{align*}
	w_+&\mapsto w_+\otimes v_-+w_-\otimes v_+\\
	w_-&\mapsto v_-\otimes w_-\\
	w_+'&\mapsto w_+'\otimes v_-+w_-'\otimes v_+\\
	w_-'&\mapsto w_-'\otimes v_-
\end{align*}respectively.
\end{prop}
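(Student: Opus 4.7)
The plan is to fix the maps in three stages. First, naturality of the inclusion $i\colon V\to W$ pins down the $\{w_+,w_-\}$ block. Second, the Frobenius $V$-module structure on $W$ from $\Is(S_+)$, together with the Frobenius compatibility of $\Is(S_-)$, reduces the problem to identifying a complement of $i(V)$. Third, the two-component decomposition of the representation variety of $(\RP^3,U')$ supplies this complement and forces all remaining coefficients.

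For the $\{w_+,w_-\}$ block: write $w_\pm=i(v_\pm)$ with $i$ induced by a fixed 2-handle cobordism $\Sigma\colon(S^3,U_1)\to(\RP^3,U')$. Comparing the two cobordisms $(S^3,U_1\sqcup U_1)\to(\RP^3,U')$ obtained by (a) $\Sigma\sqcup\idd$ followed by $S_+$, or (b) the $S^3$-pair-of-pants followed by $\Sigma$ — both supported in disjoint balls and hence homotopic rel boundary — Proposition \ref{prop hmtp inv} gives $\Is(S_+)\circ(i\otimes\idd)=i\circ\Delta$. The Frobenius formulas for $\Delta$ from \cite[Lemma 8.7]{kronheimer2011khovanov} then read off all four merge entries on $w_\pm$; the analogous argument with $S_-$ and $\nabla$ gives $\Is(S_-)(w_\pm)$.

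The same disjoint-ball trick applied to compositions of pair-of-pants shows that $\Is(S_+)$ makes $W$ into a $V$-module, that $i$ and $j$ in the surgery triangle $V\xrightarrow{i}W\xrightarrow{j}V$ are $V$-module maps, and that $\Is(S_-)$ is a $V$-$V$-bimodule map Frobenius-compatible with $\Is(S_+)$. Since $V$ is free over itself, this short exact sequence splits as $V$-modules. For the complement, I would use the representation-variety description recalled just before Proposition \ref{prop E1 objects}: the critical set of $(\RP^3,U')$ is a disjoint union of two $S^2$-components, giving a canonical $\Z/4$-graded decomposition of $W$ into two rank-two summands. The image of $i$ equals one of these summands (since the cobordism $\Sigma$ restricts to a map of representation varieties landing in a single component), so the other $S^2$-component provides a canonical complement $W'\subset W$ with distinguished generators $w_+'\in W^0$ and $w_-'\in W^{-2}$. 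Normalizing so that $j(w_+')=v_+$ and verifying $w_-'=v_-\cdot w_+'$ then gives the four merge entries on $\{w_+',w_-'\}$ tautologically.

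For the split entries, the same two-component argument applied to $S_-\colon(\RP^3,U')\to(\RP^3,U'\sqcup U_1)$ shows that $\Is(S_-)$ respects the canonical decomposition of $W$ and of $W\otimes V\cong\Is(\RP^3,U'\sqcup U_1)$, so $\Is(S_-)(W')\subset W'\otimes V$. Combined with $V$-bimodule linearity and naturality of $j$, namely $(j\otimes\idd)\circ\Is(S_-)=\nabla\circ j$ (again a disjoint-ball homotopy), this forces $\Is(S_-)(w_+')=w_+'\otimes v_-+w_-'\otimes v_+$ and $\Is(S_-)(w_-')=w_-'\otimes v_-$. The main obstacle is justifying that saddle cobordisms respect the two-component decomposition; my plan is to use the local-coefficient description sketched before Proposition \ref{prop E1 objects}, where over $\car=\Z[t,t^{-1}]$ the two $S^2$-components are distinguished by distinct holonomy weights of $t$ and hence cannot be mixed by cobordism-induced maps, and then specialize to $\Z$ coefficients.
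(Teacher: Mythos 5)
Your first stage---using the surgery cobordism $P_+$ (your $\Sigma$) and functoriality of $\Is$ under composition of cobordisms to pin down $\Is(S_\pm)$ on the subgroup $i(V)=\langle w_+,w_-\rangle$---is essentially the paper's argument, with one caveat: you appeal to Proposition~\ref{prop hmtp inv}, whose hypotheses cover cobordisms embedded in $\RP^3\times I$, whereas the composites involving $P_\pm$ live in $4$-manifolds with $2$-handles attached. The paper instead uses plain functoriality for composite cobordisms (e.g.\ $S_+\cup D_+$ is diffeomorphic to a product, and $\Is(P_+)\circ\nabla=\Is(S_+)\circ\Is(P_+\#\idd)$), which is the cleaner justification.

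The serious gap is in your second and third stages. You assert that the two $S^2$-components of the representation variety of $(\RP^3,U')$ yield a \emph{canonical} complement $W'$ to $i(V)$ which is respected by all saddle-cobordism maps, and from this deduce the $w_\pm'$ entries with no undetermined constants. The paper explicitly labels the splitting in (\ref{eqn split U'}) as non-canonical and makes no attempt to show that $\Is(S_\pm)$ respects the component decomposition. This is not a cosmetic difference: the Floer chain groups do split by component (there is no differential), but the cobordism maps count instantons on $(\RP^3\times I,S_\pm)$, and there is no obvious topological constraint forcing the two asymptotic limits of a finite-energy instanton to lie over corresponding components of the two representation varieties; the configuration space in which the flow lives is connected, so instantons can in principle interpolate between the $\theta=\pi/4$ and $\theta=3\pi/4$ families. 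Your appeal to local coefficients does not close this: the $\car$-coefficient discussion in the paper (Proposition~\ref{prop hmtp inv}) is used only to establish freeness of the modules and hence homotopy-invariance of cobordism maps, not to produce a $t$-weight decomposition preserved by those maps; the cobordism map is a single $\car$-linear map and the holonomy weighting is already internal to the count.

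The paper instead takes an arbitrary splitting, derives from the two functoriality relations $\Is(P_-)\circ\Is(S_+)=\nabla\circ\Is(P_-\#\idd)$ and its $S_-$-analogue that the only possible deviations from the stated formulas are four constants $A,B,C,D$, then proves $A=C$ and $B=D=0$ using the auxiliary torus-operator $\sigma=\Is(S_+)\circ\Is(S_-)$ and the relation $\Is(S_+)\circ(\sigma\otimes\idd)=\Is(S_+)\circ(\idd\otimes\sigma)=\sigma\circ\Is(S_+)$ (Lemma~\ref{lem sigma}), and finally absorbs $A$ by the change of basis $\widetilde{w}_+'=w_+'-Aw_+$. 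To salvage your approach you would need to prove that $\Is(S_\pm)$ preserves the component decomposition (equivalently, that $A=0$ for the component-aligned choice of $w_+'$), and that seems to require an argument of comparable substance to the paper's Lemma~\ref{lem sigma}; it is not free.
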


\begin{proof}
	We first choose a splitting in (\ref{eqn split U'}) in an arbitrary way, and denote the corresponding basis by $w_+,w_-,w_+',w_-'$. Let \[D_+\colon (S^3,U_0)\to (S^3,U_1)\]and \[D_-\colon (S^3,U_1)\to (S^3,U_0)\]be the cap and cup cobordism respectively.
	
	Recall from Section \ref{sec Is unlink} that there is an exact triangle
	
	 \[ \xymatrix@C=10pt@R10pt{
		\Is(S^3,U_1) \ar[rr]^{i} & & \Is(\RP^3,U') \ar[dl]^{j} \\
		& \Is(S^3,U_1)\ar[ul]^{k}. & \\
	}. \]Here $i,j,k$ are induced by cobordisms. Denote the cobordisms that induce $i,j$ by $P_+$ and $P_-$, respectively. Then $P_+$ is a product cobordism $(S^3,U_1)\times I$ with a $4$-dimensional $2$-handle attached along a $(-2)$-framed unknot in $S^3\times\{1\}$, and $P_-$ is the inverse of the product cobordism with a $4$-dimensional $2$-handle attached along a $2$-framed unknot. We have\begin{equation}\label{eqn proj surgery w+-}
	\Is(P_+)(v_\pm)=w_\pm,\,\Is(P_-)(w_\pm)=0,\,\Is(P_-)(w_\pm')=v_\pm.
\end{equation}

The cobordism $S_+\cup D_+$ is isotopic to the product cobordism from $(\RP^3,U')$ to itself. Hence, we have \[\Is(S_+)(w\otimes v_+)=\Is(S_+\cup D_+)(w)=w\]for all $w\in W$ by the functoriality of $\Is$.

From the functoriality of $\Is$, \[\Is(P_+)\circ \nabla=\Is(S_+)\circ \Is(P_+\#\idd).\]Here $P_+\#\idd$ is a cobordism from $(S^3,U_1)\#(S_3,U_1)$ to $(\RP^3,U')\#(S^3,U_1)$, formed as the connected sum of cobordisms $P_+$ and $(S^3,U_1)\times I$. By (\ref{eqn proj surgery w+-}) and \cite[Lemma 8.7]{kronheimer2011khovanov}, we have\begin{align*}
\Is(S_+)(w_+\otimes v_-)&=\Is(S_+)\circ \Is(P_+\#\idd)(v_+\otimes v_-)\\&=\Is(P_+)\circ \nabla(v_+\otimes v_-)\\
&=\Is(P_+)(v_-)=w_-.
\end{align*}A similar argument shows \[\Is(S_+)(w_-\otimes v_-)=0.\]

From the functoriality of $\Is$, \[\nabla\circ\Is(P_-\#\idd)= \Is(P_-)\circ \Is(S_+).\]By degree reason, we can assume \[\Is(S_+)(w_+'\otimes v_-)=\lambda_1w_-+\lambda_2w_-'.\]Then by (\ref{eqn proj surgery w+-}) and \cite[Lemma 8.7]{kronheimer2011khovanov},\begin{align*}
\lambda_2v_-&=\Is(P_-)(\lambda_1w_-+\lambda_2w_-')\\&=\Is(P_-)\circ\Is(S_+)(w_+'\otimes v_-)\\&=\nabla\circ\Is(P_-\#\idd)(w_+'\otimes v_-)
\\&=\nabla(v_+\otimes v_-)=v_-.
\end{align*}Therefore, $\lambda_2=1$. Replace $\lambda_1$ by $A$, and then \[\Is(S_+)(w_+'\otimes v_-)=Aw_-+w_-'.\] Similarly, there is some $B$ such that \[\Is(S_+)(w_-'\otimes v_-)=Bw_+.\]

The functoriality of $\Is$ implies \begin{align}\label{eqn func S_- 1}
	\Is(\idd\#D_-)\circ\Is(S_-)&=\idd\\\label{eqn func S_- 2}
\Is(S_-)\circ\Is(P_+)&=\Is(P_+\#\idd)\circ\Delta\\\label{eqn func S_- 3}
	\Is(P_-\#\idd)\circ\Is(S_-)&=\Delta\circ\Is(P_-).
\end{align} Dual argument applies for $\Is(S_-)$ to compute the splitting maps. For example, assume that \[\Is(S_-)(w_+)=\lambda_3w_+\otimes v_-+\lambda_4w_-\otimes v_++\lambda_5w_+'\otimes v_-+\lambda_6w_-'\otimes v_+.\]Plugging into (\ref{eqn func S_- 2}) gives $\lambda_3=\lambda_4=1$, $\lambda_5=\lambda_6=0$. Similarly, we have \begin{align*}
\Is(S_-)(w_-)&=w_-\otimes v_-\\
\Is(S_-)(w_+')&=w_+'\otimes v_-+w_-'\otimes v_++Cw_-\otimes v_+\\
\Is(S_-)(w_-')&=w_-'\otimes v_-+Dw_+\otimes v_+
\end{align*}for some $C$ and $D$. The following lemma claims that there is essentially only one undetermined variable.

\begin{lem}\label{lem sigma}
	We have $A=C$, and $B=D=0$. 
\end{lem}

Assuming Lemma \ref{lem sigma}, we can write $\widetilde{w}_+'=w_+'-Aw_+$. Then it can be verified that our new basis $w_+,w_-,\widetilde{w}_+',w_-'$ satisfies the condition. For example, we have \begin{align*}
\Is(S_-)(\widetilde{w}_+')&=w_+'\otimes v_-+w_-'\otimes v_+-Aw_+\otimes v_-\\
&=(w_+'-Aw_+)\otimes v_-+w_-'\otimes v_+\\
&=\widetilde{w}_+'\otimes v_-+w_-'\otimes v_+,
\end{align*}as expected.
\end{proof}

\begin{proof}[Proof of Lemma \ref{lem sigma}]
	We study an auxiliary operator $\sigma$ defined as in \cite[Section 8.3]{kronheimer2011khovanov}. For a link $K\subset \RP^3$ with a marked point $p$,  \[\sigma\colon\Is(\RP^3,K)\to\Is(\RP^3,K)\]is a degree $2$ operator given by the induced map of the cobordism \[\left(\RP^3\times[-1,1],(K\times [-1,1])\#_{p\times\{0\}}T^2\right).\]
	
	By the functoriality of $\Is$, we have $\sigma=\Is(S_+)\circ\Is(S_-)$. Therefore, for $K=U'$, we have\begin{align*}
		\sigma(w_+)&=2w_-\\
		\sigma(w_-)&=0\\
		\sigma(w_+')&=(A+C)w_-+2w_-'\\
		\sigma(w_-')&=(B+D)w_+.
	\end{align*}From the proof of \cite[Lemma 8.7]{kronheimer2011khovanov} and the K\"unneth formula, we know that for $K=U_1$, we have\begin{align*}
		\sigma(v_+)&=2v_-\\
		\sigma(v_-)&=0.
	\end{align*}
	
	By the functoriality again, we have \[\Is(S_+)\circ(\sigma\otimes\idd)=\Is(S_+)\circ(\idd\otimes\sigma)=\sigma\circ\Is(S_+).\]We know that\begin{align*}
		\Is(S_+)\circ(\sigma\otimes\idd)(w_+'\otimes v_+)&=(A+C)w_-+2w_-'\\
		\Is(S_+)\circ(\idd\otimes\sigma)(w_+'\otimes v_+)&=2Aw_-+2w_-'.
	\end{align*}This implies $A=C$. Similarly, we have \begin{align*}
		\Is(S_+)\circ(\sigma\otimes\idd)(w_+'\otimes v_-)&=2Bw_+\\
		\Is(S_+)\circ(\idd\otimes\sigma)(w_+'\otimes v_-)&=0\\
		\sigma\circ\Is(S_+)(w_+'\otimes v_-)&=(B+D)w_+.
	\end{align*}This implies $B=D=0$.
\end{proof}

\begin{proof}[Proof of Theorem \ref{ss class 1}]
	Fix a diagram for $K$ and consider a collection of crossingless links $K_v\,(v\in\{0,1\}^N)$ as above. The result follows from the same argument as in Theorem \ref{ss class 0}, but we replace Proposition \ref{prop case 0 local cobordism} by Proposition \ref{prop case 1 local}. For the reduced version, we also have the skein exact triangle (\ref{eqn skein U'}). The map $c$ in (\ref{eqn skein U'}) has degree zero, and is given by a quotient map \[c\colon W\to W/\langle w_-,w_-'\rangle.\]Denote \[V_r=V/\langle v_-\rangle,\,W_r=W/\langle w_-,w_-'\rangle.\]Depending on the position of the marked point,  the merging map $\In(S_+)$ has the form \[W_r\otimes V\to W_r\] or \[W\otimes V_r\to W_r.\]Moreover, it is given by the quotient map of $\Is(S_+)$. A similar argument holds for $\In(S_-)$. 
\end{proof}

\begin{rem}
For a class $1$ link, we can orient all the links $K_v$ and all the cobordisms $S_{vu}$ in a consistent way as we did for Khovanov homology in Section \ref{sec bg}. The orientations on $S_{vu}$ determine a set of choices of $\In$-orientations satisfying the condition of \cite[Lemma 6.1]{kronheimer2011khovanov}. It is hence expected to be able to lift the spectral sequence to $\Z$ coefficients if one can specify the signs in Proposition \ref{prop E1 objs canonical}. It seems that $\ZZ$ coefficients should be enough for most practical purposes, as the instanton knot homology is usually defined with $\CC$ coefficients.
\end{rem}

The rank inequality (\ref{eqn rank ineq KHI}) follows directly from the spectral sequences.

\begin{proof}[Proof of Corollary \ref{cor rank ineq}]
	The class $0$ case follows from Theorem \ref{ss class 0} and the universal coefficient theorem as \[2\dim\Kh(K;\ZZ)\ge \dim\Is(\RP^3,K;\ZZ)\ge \dim\Is(\RP^3,K;\CC).\]The class $1$ case similarly follows from Proposition \ref{prop Kh1 ss}, Theorem \ref{ss class 1} and the universal coefficient theorem:\begin{align*}
	2\dim\Kh(K;\ZZ)\ge 2\dim\Kh_1(K;\ZZ)\ge\\ \dim\Is(\RP^3,K;\ZZ)\ge \dim\Is(\RP^3,K;\CC).
	\end{align*}The reduced versions follow similarly from the spectral sequences.
\end{proof}

%
%

\section{Detection results}\label{sec detection}

We can now deduce Theorem \ref{main theorem} from the rank inequality (\ref{rank ineq}) and results in (sutured) instanton knot homology.

\begin{proof}[Proof of Theorem \ref{main theorem}]
	
	Assume first that $K$ is a class $0$ knot in $\RP^3$ with \[\Khr(K;\ZZ)\cong\ZZ.\] By the rank inequality (\ref{eqn rank ineq KHI}), we have \[\dim_{\CC}\KHI(\RP^3,K)\le 2.\]By \cite[Proposition 1.5]{li2022instanton}, we have \[\dim_{\CC}\KHI(\RP^3,K)\ge\dim_{\CC}\Is(\RP^3)=2.\]Therefore, we have \[\dim_{\CC}\KHI(\RP^3,K)=2.\] Now \cite[Theorem 1.5]{li2023enhanced} implies $K$ is the unknot since $\RP^3$ is an instanton L-space and $K$ is null-homologous.
	
	Now assume that $K$ is a class $1$ knot in $\RP^3$ with \[\Khr(K;\ZZ)\cong\ZZ.\] We still have \[\dim_{\CC}\KHI(\RP^3,K)\le 2.\]By \cite[Proposition 1.5]{li2022instanton}, we have \[\dim_{\CC}\KHI(\RP^3,K)=2. \]
	
	It remains to show that this rank condition implies $K$ is isotopic to $U'$, which essentially follows from \cite[Theorem 5.1]{baldwin2021small}. The proof there utilizes an Alexander grading on $\KHI(L,J)$ for an irreducible rational homology sphere $L$ and a primitive knot $J\subset L$ defined by choosing a rational Seifert surface $S$ as an admissible surface in the sense of \cite{ghosh2023decomposing}. While the admissibility condition doesn't hold directly in our case, we can apply a positive stabilization (see \cite[Definition 2.24]{li2022instanton}) on $S$ to make $S$ admissible, and then shift the induced grading by $-1/2$. By \cite[Lemma 2.25]{li2022instanton}, doing positive stabilization does not violate the tautness of the sutured manifold $(M_\pm,\gamma_\pm)$ obtained by decomposing $L\backslash\nu(J)$ along $S$. This modified Alexander grading still detects knot genus and whether a knot is fibered. The argument in \cite[Section 5]{baldwin2021small} 
	then remains valid.
\end{proof}

	We conclude this paper by some final remarks. First, one can try to apply the rank inequality (\ref{rank ineq}) to detect other knots and links. For example, it is interesting to classify all knots in $\RP^3$ with reduced Khovanov homology of rank $3$. However, at the moment, the author does not know how to complete the case studies when $\dim\KHI(\RP^3,K;\CC)=6$ similarly to \cite[Section 6.2]{li2023enhanced}. This motivates the following conjecture.
	
	\begin{conj}\label{conj khr3}
		Let $K$ be a null-homologous knot in $\RP^3$. If $\dim\Khr(K;\ZZ)=3$, then $K$ is either the local trefoil, or the knot $2_1$ in the knot table \cite{drobotukhina1994classification}.
	\end{conj}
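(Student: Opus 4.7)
The plan is to follow the template of the proof of Theorem \ref{main theorem}. Let $K \subset \RP^3$ be null-homologous with $\dim \Khr(K;\ZZ) = 3$. Combining the rank inequality (\ref{rank ineq}) with \cite[Proposition 1.4]{kronheimer2011khovanov} and the lower bound from \cite[Proposition 1.5]{li2022instanton} gives
\[2 = \dim_\CC \Is(\RP^3) \leq \dim_\CC \KHI(\RP^3, K) \leq 6.\]
The Euler characteristic of $\KHI$ with respect to the Alexander grading---defined by first positively stabilizing a rational Seifert surface, as in the proof of Theorem \ref{main theorem}---should equal the symmetrized Alexander polynomial, forcing a parity that restricts the dimension to $\{2, 4, 6\}$.

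The case $\dim = 2$ is already handled: by \cite[Theorem 1.5]{li2023enhanced}, $K$ must be the unknot, whose reduced Khovanov homology has rank $1$, a contradiction. For $\dim = 4$, I would bound the Seifert genus using the top nontrivial Alexander grading, and then use the fiberedness detection from sutured instanton homology to cut down to a finite list of candidates; an extension of \cite[Theorem 5.1]{baldwin2021small} to null-homologous knots in instanton L-spaces other than $S^3$ should then rule out this case, since a direct Khovanov computation on the short list should show that none of the candidates realize $\dim \Khr = 3$.

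The main obstacle is the saturated case $\dim \KHI = 6$. The two expected candidates arise naturally: for the local trefoil $T$, the K\"unneth formula \cite[Corollary 5.9]{kronheimer2011khovanov} gives
\[\KHI(\RP^3, T; \CC) \cong \KHI(S^3, T; \CC) \otimes \Is(\RP^3; \CC),\]
which has dimension $6$; the Drobotukhina knot $2_1$ must be verified directly to have rank $6$ (and $\dim \Khr = 3$). Classifying \emph{all} null-homologous knots in $\RP^3$ with $\dim \KHI = 6$ is where the argument becomes genuinely difficult---exactly the gap flagged in the discussion after Theorem \ref{main theorem}. The natural approach is to bound the genus via the Alexander grading, split into fibered and non-fibered subcases, enumerate monodromies for the former (appealing to fiberedness detection by $\KHI$), and use refined Alexander-grading distributions---together with the $\sigma$-operator appearing in Lemma \ref{lem sigma} and the surgery exact triangle of Theorem \ref{thm surgery tri}---to cut down the latter. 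The combinatorial explosion of subcases, compounded by the looser structural constraints on sutured decompositions in $\RP^3$ compared with $S^3$, is precisely the step where I expect this strategy to require substantial new input beyond what is currently in \cite[Section 6.2]{li2023enhanced}.
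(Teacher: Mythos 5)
This statement is a \emph{conjecture}, and the paper does not prove it; the paper only proves the implication ``Conjecture~\ref{conj nonlocal} implies Conjecture~\ref{conj khr3}.'' You have honestly flagged the $\dim\KHI = 6$ case as the genuine obstacle, which agrees with the paper's explicit admission of the gap. But there are two substantive errors in your case analysis that should be corrected.

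First, you misallocate the two expected answers. You place both the local trefoil and $2_1$ in the saturated case $\dim\KHI = 6$, and you say the $\dim\KHI = 4$ case ``should be ruled out.'' That is backwards for $2_1$: the paper's conditional argument shows that if $2_1$ has $\dim\Khr(2_1;\ZZ) = 3$ and Conjecture~\ref{conj nonlocal} holds, then $\dim\KHI(\RP^3,2_1;\CC) \le 2\cdot 3 - 1 = 5$, hence $\le 4$ by parity, and since $2_1$ is not the unknot, $\dim\KHI(\RP^3,2_1;\CC) = 4$. In fact the entire $\dim\KHI = 4$ case is exactly where the paper \emph{finds} $2_1$, via the fibered-genus-one classification of \cite[Theorem~1.8]{li2023enhanced} and \cite[Theorem~1.2]{morimoto1989genus}. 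If you ``rule out'' $\dim\KHI = 4$, you lose one of the two knots the conjecture is claiming.

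Second, and more structurally, you are not using the local/non-local dichotomy, which is precisely what makes the paper's conditional argument close. For local knots, $\Khr(K;\ZZ)$ in $\RP^3$ agrees with the ordinary $S^3$ reduced Khovanov homology, so \cite[Theorem~1.4]{baldwin2022khovanov} immediately gives the trefoil; no instanton information about $\RP^3$ is needed. For non-local knots, Conjecture~\ref{conj nonlocal} is a \emph{strict} inequality, which forces $\dim\KHI \le 5$ and hence excludes $\dim\KHI = 6$ entirely. In other words, the only source of the problematic $\dim\KHI = 6$ case is local knots, which are disposed of by the $S^3$ detection result, and the non-local case never reaches $6$. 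Your direct dimension split mixes local and non-local knots together in the $\dim\KHI = 6$ bucket, which creates an artificially hard problem. If you adopt the local/non-local split and condition on Conjecture~\ref{conj nonlocal}, the argument becomes the paper's; unconditionally, the non-local $\dim\KHI = 6$ case remains open, and that is exactly why this is stated as a conjecture rather than a theorem.
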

	
	One can also ask if Khovanov homology can detect some links with small crossing numbers in $\RP^3$. One key ingredient in the Hopf link detection \cite{baldwin2019khovanov} is the module structure on Khovanov homology as discussed in \cite{xie2021earrings}. However, it is currently unclear to the author how to extract the module structure for class $0$ links, where the usage of $\Z$ coefficients is essential in \cite{xie2021earrings}. On the other hand, it is expected that results similar to \cite[Theorem 1.1]{xie2021earrings} can be proved for class $1$ links as working over $\Z$ is feasible.

	Another interesting and related question is whether one can use Khovanov homology or instanton Floer homology to detect local knots in $\RP^3$, or more generally, to detect the minimal linking number of a knot with a class $1$ unknot. In \cite{xie2019instanton}, Xie and Zhang showed that annular Khovanov homology detects local links in a solid torus. While Khovanov homology in $\RP^3$ is formally similar to the annular Khovanov homology, it has an obvious drawback that the $\Z$-valued $f$-grading is currently unavailable (cf. \cite[Proposition 2.17]{manolescu2023rasmussen}). Also, it seems difficult to define an additional grading on $\Is(\RP^3,K)$ using the $\mu^{\operatorname{orb}}$ action as done in \cite{xie2021instantons}. It is observed that for a non-local knot $K$, the rank of Khovanov homology of $K$ is heuristically larger than the rank of $\Is(K)$. 
	
	\begin{conj}\label{conj nonlocal}
		Let $K$ be a non-local null-homologous knot in $\RP^3$. Then \[2\dim\Khr(K;\ZZ)>\dimm\In(\RP^3,K;\CC).\]
	\end{conj}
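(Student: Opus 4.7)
The plan is to analyze when the rank inequality (\ref{rank ineq}) is forced to be strict via the class $0$ spectral sequence of Theorem \ref{ss class 0}. Equality in (\ref{rank ineq}) would require both that this spectral sequence with $E_2$ page $\Khr(m(K);\ZZ)^{\oplus 2}$ collapses at $E_2$, and that $\In(\RP^3,K;\ZZ)$ be torsion-free. The goal is then to show that at least one of these two conditions must fail whenever $K$ is non-local.

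As a baseline, I would first verify the local case. For $K\subset B^3\subset\RP^3$, one can choose a diagram lying in a disk inside $\RP^2$, and then Proposition \ref{prop case 0 local cobordism} together with the K\"unneth formula \cite[Corollary 5.9]{kronheimer2011khovanov} identifies the whole $\RP^3$ spectral sequence with the $S^3$ spectral sequence of \cite{kronheimer2011khovanov} tensored by the constant factor $V_0=\Is(\RP^3)$. This reduces the local case to the classical rank inequality over $S^3$, where equality can occur (e.g.\ for the unknot), showing the conjecture is sharp and pinning the obstruction to strictness precisely on non-local topology.

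For the non-local case, I would exploit the geometric content of the hypothesis: the exterior $\RP^3\setminus\nu(K)$ contains an essential $\RP^2$ meeting $K$ in an even, positive number of points. The most promising route is to use this essential surface to build an analog of the $\muu$ action of \cite{xie2021instantons} on $\In(\RP^3,K)$ and to show that its nontriviality forces cancellations past $E_2$ in the spectral sequence. A complementary route is to avoid the spectral sequence altogether and work inside sutured instanton theory, decomposing $\RP^3\setminus\nu(K)$ along the incompressible $\RP^2$ and using a surface-decomposition/adjunction argument in the spirit of \cite{li2023enhanced} to bound $\dim_\CC\KHI(\RP^3,K)$ strictly below $2\dim\Khr(K;\ZZ)$.

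The hardest step, and the main obstacle flagged in the remarks preceding the conjecture, is the absence of a valid $f$-grading on $\Khr$ and of a known analog on $\In(\RP^3,K)$. Without such a grading one cannot mimic the filtration-degree-wise argument of \cite{xie2019instanton} from the annular case, and instead needs a single global cancellation mechanism. Producing an explicit higher differential $d_r$ ($r\ge 2$) in Kronheimer--Mrowka's spectral sequence detected by non-local topology would suffice, but constructing higher differentials is out of reach of current techniques, even over $S^3$. Realistically, progress will likely require either a new algebraic tool, perhaps a deformation of $\Khr$ whose associated spectral sequence can be compared with the one in Theorem \ref{ss class 0}, or a sutured-theoretic input that directly constrains $\KHI(\RP^3,K)$ in the presence of an essential $\RP^2$.
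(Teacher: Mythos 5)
This statement is a \emph{conjecture} in the paper, not a theorem: the paper states it and leaves it open, offering no proof. The surrounding discussion in Section \ref{sec detection} flags precisely the obstacles you identify — the absence of a valid $f$-grading on $\Khr$ (citing \cite[Proposition 2.17]{manolescu2023rasmussen}), the difficulty of defining an additional grading on $\Is(\RP^3,K)$ via the $\muu$ action in the spirit of \cite{xie2021instantons}, and the observation that the strictness of (\ref{rank ineq}) for non-local knots is, at present, only heuristic. So your proposal is not missing some clever argument in the paper; there is no such argument, and your honest assessment that ``constructing higher differentials is out of reach of current techniques'' is in line with why the paper states this as a conjecture rather than proving it. The paper does go on to show that Conjecture \ref{conj nonlocal} would \emph{imply} Conjecture \ref{conj khr3}, but that deduction is a consequence, not a proof of the conjecture itself.

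Two small technical notes on your blueprint. Equality in (\ref{rank ineq}) through the class $0$ chain $2\dim\Khr(K;\ZZ)\ge \dim\In(\RP^3,K;\ZZ)\ge \dim\In(\RP^3,K;\CC)$ requires the spectral sequence of Theorem \ref{ss class 0} to collapse at $E_2$, and for the second inequality to be an equality one needs $\In(\RP^3,K;\Z)$ to have no $2$-torsion; ``torsion-free'' is stronger than necessary since odd torsion is invisible to both the $\ZZ$- and $\CC$-coefficient dimensions. Your baseline check in the local case is sound: for a diagram contained in a disk the $\RP^3$ spectral sequence factors as the $S^3$ spectral sequence tensored with $V_0=\Is(\RP^3)$, so the classical equality cases (e.g.\ the unknot) already show that the conjectured strictness must be tied specifically to non-local topology. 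The genuinely missing ingredient — a cancellation mechanism detecting non-locality, whether by producing higher differentials, by deforming $\Khr$, or by a sutured decomposition along the essential $\RP^2$ in the exterior — is exactly what the paper leaves open.
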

	
	Conjecture \ref{conj nonlocal} would imply Conjecture \ref{conj khr3} as follows. If $K$ is local, then the Khovanov homology of $K$ coincides with the ordinary Khovanov homology for links in $S^3$, and \cite[Theorem 1.4]{baldwin2022khovanov} implies that $K$ is the trefoil. If $K$ is non-local, then \[\dim\KHI(\RP^3,K;\CC)\le 2\dim\Khr(K;\ZZ)-1=5.\]It is not hard to show that $\dim\KHI(\RP^3,K;\CC)$ is always an even integer (see, e.g. \cite[Theorem 1.6]{li2023instanton2}), and thus, $\KHI(\RP^3,K;\CC)$ must have dimension $4$ or $2$. If $\dim\KHI(\RP^3,K;\CC)=2$, \cite[Theorem 1.5]{li2023enhanced} implies $K$ is the unknot, which contradicts to the fact that $\rankk\Khr(K;\ZZ)=3$. If $\dim\KHI(\RP^3,K;\CC)=4$, $K$ is a genus-one-fibered knot by \cite[Theorem 1.8]{li2023enhanced}. By \cite[Theorem 1.2]{morimoto1989genus}, it must be one of the knot $2_1$ or $4_2$ in \cite{drobotukhina1994classification}. The latter knot has a larger Khovanov homology group, so $K$ must be $2_1$.
	
\bibliographystyle{hplain}
\bibliography{RP3}

\end{document}